\theoremstyle{plain}
\newtheorem{thm}{Theorem}%[section]
\newtheorem{theorem}[thm]{Theorem}
\newtheorem*{theorem*}{Theorem}
\newtheorem{corollary}[thm]{Corollary}
\newtheorem{lemma}[thm]{Lemma}
\newtheorem*{lemma*}{Lemma}
\newtheorem*{conjecture*}{Conjecture}
\newtheorem{proposition}[thm]{Proposition}
\newtheorem*{fact*}{Fact}
\newtheorem*{question*}{Question}
\newtheorem{main}{Theorem}
\newtheorem{maintheorem}[main]{Theorem}
\newtheorem{maincorollary}[main]{Corollary}
\theoremstyle{remark}
\newtheorem*{remark}{Remark}
\theoremstyle{definition}
\newtheorem{definition}[thm]{Definition}
\newtheorem*{convention}{Convention}
\newtheorem*{standing hypothesis}{Standing Hypothesis}
\newcommand{\thmref}[1]{Theorem~\ref{#1}}
\newcommand{\thmrefstar}[1]{Theorem~\ref*{#1}}
\newcommand{\corref}[1]{Corollary~\ref{#1}}
\newcommand{\correfstar}[1]{Corollary~\ref*{#1}}
\newcommand{\lemref}[1]{Lemma~\ref{#1}}
\newcommand{\itemrefstar}[1]{(\ref*{#1})}
\newcommand{\defn}[1]{\emph{#1}}
\newcommand{\R}{\mathbb{R}}
\renewcommand{\setminus}{\smallsetminus}
\DeclareMathOperator{\radius}{radius}
\DeclareMathOperator{\Isom}{Isom}
\newcommand{\set}[1]{\left\{#1\right\}}
\newcommand{\setp}[2]{\left\{#1 \mid #2\right\}}
\newcommand{\abs}[1]{\left| #1 \right|}
\newcommand{\res}[1]{\vert_{#1}}
\newcommand{\cl}[1]{\overline{#1}}
\newcommand{\bd}{\partial}
\DeclareMathOperator{\Dualset}{\mathcal D}
\newcommand{\D}{\Dualset}
\DeclareMathOperator{\A}{\mathcal A}
\DeclareMathOperator{\Centers}{Centers}
\newcommand{\w}{\omega}
\newcommand{\VV}[1]{\ensuremath{\check{\mathrm{#1}}}}
\newcommand{\G}{\Gamma}
\newcommand{\BG}{\beta \Gamma}
\newcommand{\Hd}{\mathcal{H}d}
\newcommand{\Hangle}{\Hd}%{\mathcal{H}\angle}
\newcommand{\g}{\gamma}
\newcommand{\s}{\subseteq}
\newcommand{\x}{-}
\newcommand{\closedsets}{\mathcal C (\bd X)}
\newcommand{\invariantsets}{\closedsets^\G}%{\mathfrak I}
\newcommand{\invariantsetsnaught}{\closedsets^{\G_0}}%{\mathfrak I}
\newcommand{\invariantsetsG}{\closedsets^G}%{\mathfrak I}
\newcommand{\CI}{\invariantsets}
\newcommand{\CInaught}{\invariantsetsnaught}
\newcommand{\CIG}{\invariantsetsG}
\DeclareMathOperator{\dT}{d_T}
\renewcommand{\epsilon}{\varepsilon}
\title{Detecting product splittings of CAT(0) spaces}
\author{Russell Ricks}
\address{Binghamton University, Binghamton, New York, USA}
\email{ricks@math.binghamton.edu}
\thanks{The author would like to thank Ralf Spatzier for his support and many helpful discussions, Eric Swenson for his helpful comments, Alexander Lytchak for suggesting characterization \itemrefstar{me6} in \thmrefstar{equivalences}, and both the University of Michigan and Binghamton University, where the work was completed.
A couple anonymous referees also offered helpful suggestions on a previous version of the paper, including the short proof of \thmrefstar{main radius rigidity} given here.%
}
\thanks{
This material is based upon work supported by the National Science Foundation under Grant Number NSF 1045119.}
\date{\today}
\begin{document}

\maketitle

\begin{abstract}
Let $X$ be a proper CAT(0) space and $G$ a group of isometries of $X$ acting cocompactly without fixed point at infinity.
We prove that if $\partial X$ contains an invariant subset of circumradius $\pi/2$, then $X$ contains a quasi-dense, closed convex subspace that splits as a product.

Adding the assumption that the $G$-action on $X$ is properly discontinuous, we give more conditions that are equivalent to a product splitting.
In particular, this occurs if $\partial X$ contains a proper nonempty, closed, invariant, $\pi$-convex set in $\partial X$; or if some nonempty closed, invariant set in $\partial X$ intersects every round sphere $K \subset \partial X$ inside a proper subsphere of $K$.
\end{abstract}

\section{Introduction}

An important insight of geometric group theory is that a properly discontinuous, cocompact, isometric group action (that is, a geometric action) on a space $X$ often reveals additional information about $X$ itself through the dynamics of the induced action on the boundary of $X$.
In this paper we study product splittings of CAT($0$) spaces.
These spaces are geodesic metric spaces satisfying a nonpositive curvature condition that generalizes nonpositive curvature in Riemannian geometry; this allows one to study nonpositive curvature outside the smooth manifold setting.
We give some ways the product structure of a CAT($0$) space can be detected from the dynamics of the boundary action.

Let $X$ be a proper CAT($0$) space under a geometric action.
Papasoglu and Swenson \cite{ps} describe a way to detect a product splitting with a line, based on work of Ruane \cite{ruane01} and Swenson \cite{swenson99}:
If $\bd X$ has a fixed point, then $\bd X$ must be a suspension.
It is well-known \cite{bridson} that when $X$ is geodesically complete, if $\bd X$ is a suspension then $X$ must split as a product $X = Y \times \R$ for some proper CAT($0$) space $Y$.
This conclusion holds, up to passing to a quasi-dense, closed convex subspace, without the hypothesis of geodesic completeness.

Since every closed, invariant set in $\bd X$ of circumradius $< \frac{\pi}{2}$ has unique circumcenter, fixed by the action, the presence of an invariant set in $\bd X$ of radius $< \frac{\pi}{2}$ reveals a product splitting with $\R$.

For general product splittings $X = Y \times Z$ (allowing $Z \neq \R$), one finds that the boundary splits as a spherical join $\bd X = \bd Y * \bd Z$.
Both subsets $\bd Y \subset \bd X$ and $\bd Z \subset \bd X$ are closed, have radius exactly $\frac{\pi}{2}$, and (up to passing to a subgroup of finite index) are invariant under the group action \cite{fl}.
Also, both sets are $\pi$-convex.
Thus it is natural to ask if the presence of such a set reveals a product splitting.
More precisely, consider the following two questions, where $X$ is a proper \textnormal{CAT($0$)} space under a geometric action.
\begin{enumerate}
\item
Can one detect a product splitting of $X$ by finding a nonempty invariant set in $\bd X$ that has circumradius $\frac{\pi}{2}$?
\item
Can one detect a product splitting of $X$ by finding a proper nonempty, closed, invariant, $\pi$-convex set in $\bd X$?
\end{enumerate}
In this paper we show that the answer to both questions is yes.
In fact, the first result holds without requiring the group action to be properly discontinuous.

\begin{maintheorem} [\thmref{minimal radius rigidity}]	\label{main radius rigidity}
Let $X$ be a proper \textnormal{CAT($0$)} space with finite-dimensional Tits boundary, and let $G \le \Isom(X)$ be geometrically dense.
If $\bd X$ has a nonempty $G$-invariant subset of circumradius $\le \frac{\pi}{2}$
then $X$ splits as a nontrivial product. \end{maintheorem}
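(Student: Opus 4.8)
The plan is to reduce to the extremal case in which the circumradius equals $\pi/2$, then to use the rigidity theory of finite-dimensional CAT(1) spaces to extract a $G$-invariant nontrivial spherical-join decomposition of the Tits boundary, and finally to promote that to an honest product splitting of $X$. For the reduction, recall that in a complete CAT(1) space every bounded subset of circumradius $<\pi/2$ has a unique circumcenter (the CAT(1) analogue of the circumcenter construction in CAT(0) spaces, valid below radius $\pi/2$), so if the given nonempty $G$-invariant set $A\s\bd X$ had circumradius $<\pi/2$, its circumcenter would be a $G$-fixed point of $\bd X$, contradicting geometric density. Hence $A$ has circumradius exactly $\pi/2$. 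Replacing $A$ first by its closure (which has the same circumradius) and then by its set of circumcenters --- which is nonempty by finite-dimensionality of $\bd X$, closed, $\pi$-convex (an intersection of balls of radius $\pi/2$), $G$-invariant, and of circumradius $\le\pi/2$ (any point of $A$ serving as a center), hence again exactly $\pi/2$ --- I may assume henceforth that $A$ is a nonempty, closed, $\pi$-convex, $G$-invariant subset of $\bd X$ of circumradius $\pi/2$; and the same fixed-point observation then applies to every $G$-invariant closed $\pi$-convex subset of $\bd X$ produced below, so none of them may have circumradius $<\pi/2$.

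Next I would try to produce, from such an $A$, a $G$-invariant nontrivial spherical-join decomposition $\bd X = Z_1 * Z_2$. The tool is the orthogonal complement $S^{\perp} := \setp{\xi\in\bd X}{\dT(\xi,s)=\pi/2 \text{ for all } s\in S}$: one wants a $G$-invariant, closed, $\pi$-convex subset $Z_1 \s \bd X$ with $Z_1^{\perp}\ne\emptyset$, so that the spherical join $Z_1 * Z_1^{\perp}$ embeds as a $\pi$-convex subset of $\bd X$ and then --- by a maximality argument exploiting finite-dimensionality --- exhausts $\bd X$; here both $Z_1$ and $Z_2 := Z_1^{\perp}$ are $G$-invariant (being canonically defined from $A$), and neither is a single point, since a one-point factor would be a $G$-fixed point of $\bd X$. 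This is exactly where finite-dimensionality of $\bd X$ is indispensable: it is what prevents a circumradius-$\pi/2$ set from being diffusely spread over an irreducible factor and instead forces it to sit orthogonally to a complementary one. Making this precise --- in particular manufacturing the nonempty orthogonal complement, and ruling out, via geometric density, the degenerate possibility that $A=\bd X$ could leave $\bd X$ irreducible --- is the main obstacle; it is precisely the content of the rigidity theory for finite-dimensional CAT(1) spaces (in the spirit of Lytchak's work on spherical joins and of Balser--Lytchak on centers, as developed by Caprace--Lytchak for boundaries of finite-dimensional CAT(0) spaces). Everything flanking this step is essentially bookkeeping.

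Finally I would convert the boundary join into a product splitting of $X$: a $G$-invariant nontrivial spherical-join decomposition $\bd X = Z_1 * Z_2$ with both factors closed and $\pi$-convex yields a corresponding $G$-invariant nontrivial product decomposition $X = X_1 \times X_2$ with $\bd X_i$ recovering $Z_i$. This is the reverse direction of the correspondence between product splittings of CAT(0) spaces and spherical-join decompositions of their Tits boundaries recalled in the introduction (the Foertsch--Lytchak picture, together with the structure theory of Caprace--Monod for geometrically dense actions); geometric density does the extra work of guaranteeing that the convex subspace one extracts is all of $X$, so that no passage to a quasi-dense subspace is needed, and in the special case where one factor is a pair of antipodal points this is just the ``suspension implies $\R$-splitting'' statement cited above. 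Since neither $Z_i$ is a point, $X = X_1 \times X_2$ is a nontrivial product, which completes the argument.
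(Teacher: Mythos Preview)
Your reduction to a closed, $\pi$-convex, $G$-invariant set $C = \Centers(A)$ of circumradius exactly $\pi/2$ matches the paper. The gap is precisely where you yourself locate it: extracting a join decomposition $\bd X = Z_1 * Z_2$ from $C$ via orthogonal complements. There is no off-the-shelf statement in the Lytchak, Balser--Lytchak, or Caprace--Lytchak literature that does this for you; in particular nothing guarantees $C^\perp \neq \emptyset$ (a circumcenter of $A$ lies within distance $\pi/2$ of each point of $A$, not at distance exactly $\pi/2$), and nothing forces $C * C^\perp$ to exhaust $\bd X$. Your appeal to ``the rigidity theory for finite-dimensional CAT(1) spaces'' is a placeholder rather than a citation, and the sentence ``Everything flanking this step is essentially bookkeeping'' concedes that the actual content of the theorem has been deferred to an unspecified reference.

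The paper does not attempt to split $\bd X$ first. Instead it descends into $X$: intersecting the horoballs $H_a$ through a fixed basepoint (one for each $a \in A$) produces a closed convex $Y \subseteq X$ with $\bd Y = C$ exactly. Since $G$ fixes no point of $C$, Balser--Lytchak's Proposition~1.4 gives intrinsic radius $\radius^{C}(C) > \pi/2$, which is the hypothesis of Caprace--Monod's Proposition~3.6: there is a boundary-minimal closed convex $Z \subseteq X$ with $\bd Z = C$, and the union $Z_0$ of all such $Z$ already splits as $Z \times Z'$. Invariance of $C$ makes $G$ permute these boundary-minimal subsets, hence preserve $Z_0$, so minimality of the $G$-action forces $Z_0 = X$. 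The product is thus obtained directly in $X$ via boundary-minimal subspaces, bypassing both the orthogonal-complement step and any need for a cocompactness hypothesis in the join-to-product direction.
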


This gives us the following corollary.

\begin{maincorollary} [\corref{radius rigidity}]	\label{main radius rigidity corollary}
Let $X$ be a proper \textnormal{CAT($0$)} space and let $G \le \Isom(X)$ act cocompactly on $X$ without fixed point at infinity.
If $\bd X$ has a nonempty $G$-invariant subset of circumradius $\le \frac{\pi}{2}$
then a quasi-dense, closed convex subspace $X' \subseteq X$ splits as a nontrivial product. \end{maincorollary}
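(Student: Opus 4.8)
The plan is to deduce \corref{radius rigidity} from \thmref{minimal radius rigidity} by replacing $X$ with a minimal nonempty $G$-invariant closed convex subspace, on which the $G$-action will automatically be geometrically dense. Two preliminary observations drive the argument. First, since $G$ acts cocompactly on $X$, the Tits boundary $\bd X$ is finite-dimensional (Kleiner). Second, fix a compact set $K \subseteq X$ with $GK = X$; then \emph{any} nonempty $G$-invariant closed convex subset $C \subseteq X$ meets $K$: a point $c \in C$ can be written $c = gk$ with $g \in G$ and $k \in K$, and then $k = g^{-1}c \in g^{-1}C = C$, so $k \in C \cap K$. Consequently $d(x,C) \le \diam(K)$ for every $x \in X$ and every such $C$, so every nonempty $G$-invariant closed convex subset is quasi-dense, with a uniform constant.

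Let $\mathcal C$ be the collection of nonempty $G$-invariant closed convex subsets of $X$, partially ordered by reverse inclusion; it is nonempty since $X \in \mathcal C$. Given a chain $\{C_\alpha\}$ in $\mathcal C$, the sets $C_\alpha \cap K$ form a totally ordered family of nonempty compact sets, hence have the finite intersection property; by compactness of $K$ their intersection is nonempty and contained in $C := \bigcap_\alpha C_\alpha$. As an intersection of members of $\mathcal C$, the set $C$ is closed, convex, and $G$-invariant, so it lies in $\mathcal C$ and is an upper bound for the chain. By Zorn's lemma, $\mathcal C$ has a maximal element for the reverse-inclusion order, that is, a minimal nonempty $G$-invariant closed convex subspace $X' \subseteq X$.

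It remains to verify that $(X',G)$ satisfies the hypotheses of \thmref{minimal radius rigidity}. Since $X'$ is quasi-dense, closed, and convex in $X$, the inclusion induces a canonical $G$-equivariant isometry $\bd X' \to \bd X$ of Tits boundaries; in particular $\bd X'$ is finite-dimensional and contains the given nonempty $G$-invariant subset of circumradius $\le \frac{\pi}{2}$. The $G$-action on $X'$ is geometrically dense: it fixes no point of $\bd X' = \bd X$ because the original action fixes no point of $\bd X$, and it preserves no proper nonempty closed convex subset of $X'$, since such a subset would again lie in $\mathcal C$ and contradict the minimality of $X'$. Applying \thmref{minimal radius rigidity} to $(X',G)$ then shows that $X'$ splits as a nontrivial product, which is exactly the assertion of \corref{radius rigidity}. (If one also wants the $G$-action on $X'$ to be cocompact, this follows from the first paragraph, as $X' = G(X' \cap K)$ with $X' \cap K$ compact.)

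The only delicate point is the construction of $X'$: one needs the observation that every $G$-invariant closed convex set meets the fixed compact set $K$, which simultaneously supplies the uniform quasi-density bound and powers the compactness argument preventing a descending chain from having empty intersection. Once $X'$ is in hand, the remainder is a routine transfer of hypotheses along the quasi-dense inclusion $X' \hookrightarrow X$, and I anticipate no further obstacle.
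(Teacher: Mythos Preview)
Your argument is correct and follows essentially the same route as the paper: pass to a $G$-invariant, quasi-dense, closed convex subspace $X'$ on which $G$ is geometrically dense, observe $\bd X' = \bd X$ is finite-dimensional by Kleiner, and apply \thmref{minimal radius rigidity}. The paper simply invokes \propref{caprace minimality} (Caprace--Monod) for the existence of $X'$, whereas you have supplied a direct Zorn's-lemma proof of that reduction.
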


We remark that
if $G \le \Isom(X)$ is a unimodular closed subgroup (for instance, if $G$ is discrete), and $G$ acts cocompactly, then the presence of fixed points in $\bd X$ already indicates a nontrivial Euclidean factor of a quasi-dense, closed convex $X' \subseteq X$ \cite[Theorem M]{cm-fpa}.

In the case that $G = \Gamma$ is discrete, we obtain the following, stronger form of \corref{main radius rigidity corollary}, providing additional ways to detect a general product splitting in $X$.

\begin{maintheorem}					\label{equivalences}
Let $\G$ be a group acting properly discontinuously, cocompactly, and by isometries on a proper \textnormal{CAT($0$)} space $X$ with $\abs{\bd X} \ge 3$.  The following are equivalent.
\begin{enumerate}
\item
\label{me1}
For some finite index subgroup $\G_0$ of $\G$, there is a nonempty closed, $\G_0$-invariant set $A \subset \bd X$ such that for every round sphere $K \s \bd X$, $A \cap K$ lies on a proper subsphere of $K$.
\item
\label{me2}
For some finite index subgroup $\G_0$ of $\G$, $\bd X$ contains a nonempty, $\G_0$-invariant subset of circumradius $\le \frac{\pi}{2}$.
\item
\label{me6}
For some finite index subgroup $\G_0$ of $\G$, $\bd X$ contains a nonempty, closed, $\pi$-convex, $\G_0$-invariant, proper subset.
\item
\label{me3}
$\bd X$ splits as a nontrivial spherical join.
\item
\label{me4}
A quasi-dense, closed convex set $X' \s X$ splits as a nontrivial product.
\end{enumerate}
Moreover, if $X$ is minimal (for instance, if $X$ is geodesically complete) then each of the above is equivalent to
\begin{enumerate}
\setcounter{enumi}{5}
\item
\label{me5}
$X$ splits as a nontrivial product.
\end{enumerate}
\end{maintheorem}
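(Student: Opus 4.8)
The plan is to run a web of implications in which \itemref{me2}$\Leftrightarrow$\itemref{me4} — essentially \corref{main radius rigidity corollary} together with its elementary converse — serves as a bridge between the ``product'' conditions \itemref{me3}, \itemref{me4}, \itemref{me5} and the ``boundary'' conditions \itemref{me1}, \itemref{me2}, \itemref{me6}. Three facts are used throughout. First, a geometric action forces $\bd X$ to be finite-dimensional (Kleiner), so \thmref{minimal radius rigidity} and its corollary apply. Second, each of \itemref{me1}, \itemref{me2}, \itemref{me6} already permits replacing $\G$ by a finite-index subgroup, while \itemref{me3}, \itemref{me4}, \itemref{me5} are statements about $X$ alone. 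Third — the remark already quoted from \cite{cm-fpa} — since $\G_0$ is discrete, hence unimodular, and acts cocompactly, a fixed point of $\G_0$ in $\bd X$ already produces a nontrivial Euclidean factor of a quasi-dense, closed convex $X' \s X$; thus in every implication that must \emph{produce} a splitting I may assume $\G_0$ has no fixed point at infinity and invoke \corref{main radius rigidity corollary}.

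\textbf{The bridge \itemref{me2} $\Leftrightarrow$ \itemref{me3} $\Leftrightarrow$ \itemref{me4}.} For \itemref{me4}$\Rightarrow$\itemref{me3}: a quasi-dense, closed convex subspace $X' \s X$ has $\bd X' = \bd X$, and a decomposition $X' = Y \times Z$ with $Y, Z$ unbounded gives $\bd X = \bd Y * \bd Z$, a nontrivial spherical join. For \itemref{me3}$\Rightarrow$\itemref{me2}: a finite-dimensional CAT($1$) space has a canonical maximal decomposition $\bd X = B_1 * \dots * B_m$ into join-irreducible pieces; the $\G$-action on $\bd X$ permutes these pieces, so a finite-index $\G_0 \le \G$ preserves each, and taking $A$ to be any one piece (grouping the $S^0$-pieces when $\bd X$ is itself a round sphere) yields a nonempty, closed, $\G_0$-invariant, proper subset that is $\pi$-convex and of circumradius $\le \frac{\pi}{2}$ — a point of the complementary factor lies at Tits distance $\frac{\pi}{2}$ from all of $A$. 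For \itemref{me2}$\Rightarrow$\itemref{me4}: apply \corref{main radius rigidity corollary} to the cocompact group $\G_0$ and the given invariant set, using the fixed-point reduction above. Note the construction in \itemref{me3}$\Rightarrow$\itemref{me2} simultaneously gives \itemref{me3}$\Rightarrow$\itemref{me6}, since $A$ is also proper and $\pi$-convex.

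\textbf{Attaching \itemref{me6} and \itemref{me1}.} It remains to prove \itemref{me6}$\Rightarrow$\itemref{me2}, \itemref{me1}$\Rightarrow$\itemref{me2}, and \itemref{me4}$\Rightarrow$\itemref{me1}. For \itemref{me6}$\Rightarrow$\itemref{me2} I would show that a nonempty, closed, $\pi$-convex, proper subset $A \s \bd X$ has circumradius $\le \frac{\pi}{2}$, so that $A$ itself witnesses \itemref{me2}: picking $\xi \in \bd X \setminus A$ and studying the nearest-point projection of $\xi$ onto $A$ together with the geometry near $\xi$, one locates — using finite-dimensionality of $\bd X$ and the structure theory of finite-dimensional CAT($1$) spaces following Balser--Lytchak — a point within $\frac{\pi}{2}$ of all of $A$. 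For \itemref{me1}$\Rightarrow$\itemref{me2}: a set $A$ meeting every (maximal) round sphere $K$ in a proper subsphere cannot fill a hemisphere of any such $K$, and the same finite-dimensional CAT($1$) machinery then forces circumradius $\le \frac{\pi}{2}$. For \itemref{me4}$\Rightarrow$\itemref{me1}: from the decomposition $\bd X = B_1 * \dots * B_m$ above, every round sphere $K$ inherits a join decomposition $K = (K \cap B_1) * \dots * (K \cap B_m)$, so the finite-index-invariant factor $A := B_1$ meets $K$ in the subsphere $K \cap B_1$, which is proper unless $K \s B_1$; and $K \s B_1$ for a maximal round sphere would let us enlarge $K$ inside $B_1 * (\text{the rest})$, contradicting maximality — the exceptional case, where ``the rest'' is too small to enlarge $K$, is exactly a $\G_0$-fixed point at infinity, handled separately. (In the purely spherical case $\bd X = S^n$ one takes instead $A$ a great $(n-2)$-subsphere, or the fixed point coming from a Euclidean factor.) Finally, if $X$ is minimal then the quasi-dense, closed convex subspace from \corref{main radius rigidity corollary}, which may be taken $\G$-invariant, is all of $X$, so \itemref{me4} becomes \itemref{me5}, while \itemref{me5}$\Rightarrow$\itemref{me4} is trivial; geodesic completeness implies minimality in this sense by a standard argument.

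\textbf{Main obstacle.} The soft part is the bridge \itemref{me2}$\Leftrightarrow$\itemref{me3}$\Leftrightarrow$\itemref{me4}, which reduces to \corref{main radius rigidity corollary} and the triviality that the Tits boundary of a product is a spherical join. The genuine work is in \itemref{me6}$\Rightarrow$\itemref{me2} and \itemref{me1}$\Rightarrow$\itemref{me2} (and, to a lesser extent, \itemref{me4}$\Rightarrow$\itemref{me1}): one must bound by $\frac{\pi}{2}$ the circumradius of a $\pi$-convex proper set, or of a set that is thin on every maximal round sphere, and here finite-dimensionality of $\bd X$ and the fine structure of round spheres and $\pi$-convex subsets in finite-dimensional CAT($1$) spaces cannot be avoided.
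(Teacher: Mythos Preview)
Your overall architecture matches the paper's: the bridge \itemref{me2}$\Leftrightarrow$\itemref{me3}$\Leftrightarrow$\itemref{me4} is straightforward, \itemref{me3}$\Rightarrow$\itemref{me6} and \itemref{me3}$\Rightarrow$\itemref{me1} go via the canonical join decomposition and the fact that a round sphere in a join inherits the join (the paper's \lemref{decompositions on spheres}), and you correctly isolate \itemref{me1}$\Rightarrow$\itemref{me2} and \itemref{me6}$\Rightarrow$\itemref{me2} as the real content.

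The gap is precisely there. Your plan for both implications is to bound $\radius^{\bd X}(A)$ by $\frac{\pi}{2}$ using only ``finite-dimensional CAT($1$) machinery following Balser--Lytchak'' and a nearest-point projection argument. This cannot work: the purely metric statement is false. Take $Y = [0,2\pi]$ with the length metric --- a $1$-dimensional CAT($1$) space --- and $A = [0,\tfrac{3\pi}{2}]$. Then $A$ is closed, proper, and $\pi$-convex, but $\radius^Y(A) = \tfrac{3\pi}{4} > \tfrac{\pi}{2}$. So finite-dimensionality and $\pi$-convexity alone do not force small circumradius; the dynamics of the $\Gamma$-action on $\bd X$ must enter, and your sketch does not use it.

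The paper's route uses that dynamics in an essential way. Via the Stone--\v{C}ech compactification $\beta\Gamma$ and the Guralnik--Swenson limiting operators $T^{\omega}$, one has \emph{foldings}: elements $\omega \in \beta\Gamma$ that collapse all of $\bd X$ onto a single round sphere $K$ isometrically on $K$. Combining folding with $\pi$-convergence yields the Centers Lemma, whose key identity is
\[
\radius^{\bd X}(A) \;=\; \radius^{K}(A \cap K)
\]
for any $A \in \CI$ and any folded round sphere $K$. This is exactly the missing link between ``small on each round sphere'' and ``small in $\bd X$''. Then \itemref{me1}$\Rightarrow$\itemref{me2} is immediate (a proper subsphere of $K$ has radius $\le \frac{\pi}{2}$ in $K$), and \itemref{me6}$\Rightarrow$\itemref{me2} follows because $A \cap K$ is a proper $\pi$-convex subset of the sphere $K$, which by an elementary induction on $\dim K$ (the paper's \lemref{no large proper convex sets}) has radius $\le \frac{\pi}{2}$ in $K$. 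None of the Balser--Lytchak center results substitute for this dynamical step.
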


\section{Centers in $\bd X$}

Let $X$ be a proper CAT($0$) space (\defn{proper} meaning that closed balls in $X$ are compact).
Denote by $\Isom(X)$ the isometry group of $X$.

The CAT($0$) space $X$ has a natural geometric boundary $\bd X$.  It can be obtained by taking all geodesic rays from a fixed basepoint $x_0 \in X$, under the compact-open topology---this is the standard topology on $\bd X$, often called the \defn{cone topology}.  If one takes all geodesic rays and (possibly trivial) geodesic segments from $x_0 \in X$, one obtains the compact Hausdorff space $\cl X = X \cup \bd X$ (compact because $X$ is proper).  One can show that the choice of $x_0$ does not matter---$\cl X$ is the same, up to homeomorphism, independent of $x_0 \in X$.

There is a natural metric on the set $\bd X$, called the \defn{angle metric} and denoted $\angle$, which is obtained by setting $\angle (p,q) = \sup_{x \in X} \angle_x (p,q)$, where $\angle_x (p,q)$ is the angle between $p,q \in \bd X$ as seen from $x \in X$.  The \defn{Tits metric}, denoted $\dT$, is then the path metric associated to $\angle$.  The angle metric $\angle$ on $\bd X$ satisfies $\angle (p,q) = \min \set{\pi, \dT(p,q)}$ for all $p,q \in \bd X$, hence $\angle$ can be recovered as the cutoff metric of the Tits metric.  Thus $\angle$ and $\dT$ induce the same topology on $\bd X$; however, this topology is at least as fine, and generally much finer, than the cone topology.

As $\bd X$ is metrizable under the cone topology, and the angle metric induces a finer topology than the cone topology (in general), there is some ambiguity in referring to $\bd X$.  However, the following simple convention suffices to distinguish the two for our purposes in this paper.

\begin{convention}
In this paper, when we refer to a topology on $\bd X$ (such as describing a subset as closed), we will always mean the cone topology.  Likewise, when we refer to a metric $d$ on $\bd X$, we will always mean the angle metric $d := \angle$. \end{convention}

When it seems prudent to emphasize that we are dealing with $\bd X$ as a metric space, we will refer to it as the \defn{Tits boundary}.  

We mention one more standard, important relationship between the cone topology and the angle metric on $\bd X$:

\begin{fact*}
The angle metric is lower semicontinuous with respect to the cone topology. \end{fact*}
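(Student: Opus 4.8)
The plan is to write the angle metric $\angle$ as a pointwise supremum of functions on $\bd X \times \bd X$ that are continuous for the cone topology; since a pointwise supremum of continuous---hence of lower semicontinuous---functions is lower semicontinuous, the Fact will follow at once. To set this up, fix a basepoint $x_0 \in X$; for $\xi \in \bd X$ let $\gamma_\xi \colon [0,\infty) \to X$ be the unit-speed geodesic ray from $x_0$ representing $\xi$; and for each $t > 0$ define $f_t \colon \bd X \times \bd X \to [0,\pi]$ by $f_t(\xi,\eta) = \tilde\angle_{x_0}\bigl(\gamma_\xi(t), \gamma_\eta(t)\bigr)$, the Euclidean comparison angle at $x_0$ of the geodesic triangle with vertices $x_0$, $\gamma_\xi(t)$, $\gamma_\eta(t)$. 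I would then prove two claims: that each $f_t$ is continuous for the cone topology, and that $\angle = \sup_{t>0} f_t$.

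First I would verify continuity of $f_t$ for fixed $t$. The map $\xi \mapsto \gamma_\xi(t)$ is continuous from $\bd X$ (with the cone topology) to $X$, which is immediate from the definition of the cone topology on $\cl X$ in terms of truncated-cone neighborhoods. Since $\gamma_\xi(t) \neq x_0$ for $t > 0$, and the comparison angle $\tilde\angle_{x_0}(a,b)$ depends continuously on $(a,b) \in (X \setminus \{x_0\})^2$ through the Euclidean law of cosines, the composite $f_t$ is continuous on $\bd X \times \bd X$.

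The substantive step---and the one I expect to be the main obstacle---is the identity $\angle(\xi,\eta) = \sup_{t>0} f_t(\xi,\eta)$. By the standard monotonicity of comparison angles in a CAT($0$) space (if $q' \in [p,q]$ and $r' \in [p,r]$, then $\tilde\angle_p(q',r') \le \tilde\angle_p(q,r)$; apply this with $p = x_0$), the function $t \mapsto f_t(\xi,\eta)$ is non-decreasing, so $\sup_{t>0} f_t(\xi,\eta) = \lim_{t\to\infty} f_t(\xi,\eta)$; that this limit equals $\angle(\xi,\eta) = \sup_{x\in X}\angle_x(\xi,\eta)$ is a standard property of the angular metric (see \cite{bridson}). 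The inequality $f_t(\xi,\eta) \le \angle(\xi,\eta)$ for all $t$ is the usual bound of a comparison angle at a point by the Tits angle; for the reverse inequality one uses that two geodesic rays asymptotic to the same boundary point but issuing from different basepoints remain within bounded distance of each other, so that $\lim_{t\to\infty} \tfrac{1}{t}\, d\bigl(\gamma_\xi(t),\gamma_\eta(t)\bigr)$---which through the Euclidean law of cosines determines $\lim_{t\to\infty} f_t(\xi,\eta)$---does not change if $x_0$ is replaced by any other basepoint, and hence dominates $\angle_x(\xi,\eta)$ for every $x$. The conceptual content is precisely that the comparison angle measured from a \emph{single} basepoint recovers, in the limit, the supremum over \emph{all} basepoints that defines the Tits angle. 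Granting this identity, $\angle = \sup_{t>0} f_t$ displays $\angle$ as a pointwise supremum of cone-continuous functions, hence as a function that is lower semicontinuous on $\bd X \times \bd X$ with the product cone topology, as claimed.
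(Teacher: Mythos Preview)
Your argument is correct and is essentially the standard proof (as in Bridson--Haefliger, Proposition II.9.2 and II.9.8): express $\angle$ as the monotone limit of the cone-continuous comparison-angle functions $f_t$, so that $\angle = \sup_{t>0} f_t$ is a supremum of continuous functions and hence lower semicontinuous. The one point worth tightening is the justification of the reverse inequality $\sup_t f_t(\xi,\eta) \ge \angle_x(\xi,\eta)$ for arbitrary $x$: rather than arguing informally via basepoint-independence of $\lim_t \tfrac{1}{t}\,d(\gamma_\xi(t),\gamma_\eta(t))$, you can simply invoke the characterization $\angle(\xi,\eta) = 2\arcsin\bigl(\tfrac{1}{2}\lim_{t\to\infty}\tfrac{1}{t}\,d(\gamma_\xi(t),\gamma_\eta(t))\bigr)$ directly, which is exactly what Bridson--Haefliger prove.

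As for comparison with the paper: the paper does not prove this statement at all. It is stated as a \emph{Fact} without proof, introduced as ``one more standard, important relationship between the cone topology and the angle metric on $\bd X$''. So there is nothing to compare your approach against---you have supplied a proof where the paper simply cites the result as known.
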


\begin{definition}
Let $X$ be a CAT(0) space and let $G \le \Isom(X)$.  Write $\closedsets$ for the collection of all proper, nonempty, closed subsets of $\bd X$, and write $\CIG$ for the collection of all proper, nonempty, closed, $G$-invariant subsets of $\bd X$. \end{definition}

\begin{definition}
Let $Y$ be any metric space, and let $y \in Y$ and $A \s Y$.
Let $d(y, A) = \inf_{a \in A} d(y, a)$ denote the \defn{metric distance} from $y$ to $A$, and $\Hd(y, A) = \sup_{a \in A} d(y, a)$ the \defn{Hausdorff distance} from $y$ to $A$. \end{definition}

From semicontinuity of the angle metric, we obtain the following observation.

\begin{lemma}						\label{Hausdorff semicontinuity}
Let $A \in \closedsets$.  Then both $d(\x, A) \colon \bd X \to \R$ and $\Hd(\x, A) \colon \bd X \to \R$ are lower semicontinuous. \end{lemma}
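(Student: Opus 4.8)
The plan is to handle the two functions by different routes: the Hausdorff-distance function follows immediately from the Fact, while the metric-distance function needs a compactness argument.

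\emph{The function $\Hd(\x, A)$.} For each fixed $a \in A$, the map $y \mapsto d(y, a)$ is the restriction of the jointly lower semicontinuous function $d = \angle$ (the Fact) to the slice $\bd X \times \set{a}$, and is therefore lower semicontinuous with respect to the cone topology. Since a pointwise supremum of lower semicontinuous functions is lower semicontinuous, $\Hd(\x, A) = \sup_{a \in A} d(\x, a)$ is lower semicontinuous; here only nonemptiness of $A$ is needed.

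\emph{The function $d(\x, A)$.} Infima of lower semicontinuous functions need not be lower semicontinuous, so here I would exploit that $\bd X$ is compact and metrizable in the cone topology (because $X$ is proper), so that $A$, being cone-closed, is cone-compact. It then suffices to show that for each $c \in \R$ the sublevel set $\set{y \in \bd X : d(y, A) \le c}$ is closed in the cone topology. First, for fixed $y$, the map $a \mapsto d(y, a)$ is lower semicontinuous on the cone-compact set $A$, hence attains its infimum, so $d(y, A) = d(y, a_y)$ for some $a_y \in A$. Now take a sequence $y_n \to y$ in the cone topology with $d(y_n, A) \le c$, and choose $a_n \in A$ with $d(y_n, a_n) = d(y_n, A) \le c$. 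Passing to a subsequence, cone-compactness of $A$ gives $a_n \to a$ in the cone topology for some $a \in A$; then $(y_n, a_n) \to (y, a)$ in $\bd X \times \bd X$, and joint lower semicontinuity of $d$ yields $d(y, a) \le \liminf_n d(y_n, a_n) \le c$, whence $d(y, A) \le d(y, a) \le c$. So the sublevel set is cone-closed, and $d(\x, A)$ is lower semicontinuous.

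The step I would flag as the crux is precisely the passage of infima through semicontinuity, which fails in general and is salvaged here solely by compactness of $A$. One must also keep the two topologies on $\bd X$ carefully distinct: it is the \emph{cone} topology in which $A$ is compact and in which the Fact supplies joint lower semicontinuity of the angle metric, even though "distance" and "Hausdorff distance" are measured in the (finer) angle metric; the argument above is arranged so that each topology is invoked only where it applies. Everything else is routine.
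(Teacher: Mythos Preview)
Your proof is correct. The paper does not actually supply a proof of this lemma: it merely states it as an ``observation'' following from lower semicontinuity of the angle metric, leaving the reader to fill in the details. Your argument does exactly that, and does it carefully --- in particular, you correctly identify that the $\Hd$ case is automatic (suprema of lsc functions are lsc) while the $d(\x,A)$ case genuinely requires the cone-compactness of $A$ together with \emph{joint} lower semicontinuity of $\angle$, since infima of lsc functions are not lsc in general. Your bookkeeping of the two topologies is also on point.
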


\begin{definition}
Let $Y$ be any metric space, and let $A \s Y$.
Let
\[\radius^Y (A) = \inf_{y \in Y} \Hd(y, A)\]
denote the \defn{radius} (or \defn{circumradius}) of $A$ in $Y$, and let
\[\Centers^Y (A) = \setp{y \in Y}{\Hd(y, A) = \radius^Y (A)}\]
denote the set of \defn{centers} (or \defn{circumcenters}) of $A$ in $Y$.
When $Y$ is clear from context, we will sometimes write just $\radius (A)$ and $\Centers (A)$.
\end{definition}

\begin{corollary}					\label{invariance of centers}
Suppose $A$ is a nonempty closed, invariant subset of $\bd X$.  Then $\Centers (A)$  is a nonempty closed, invariant set. \end{corollary}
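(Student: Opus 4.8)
The plan is to derive all three properties from a single observation: the Hausdorff-distance function $\Hd(\x, A)\colon \bd X \to \R$ is lower semicontinuous for the cone topology. For proper $A$ this is \lemref{Hausdorff semicontinuity}, and in general it follows because $\Hd(y, A) = \sup_{a \in A} d(y,a)$ is a supremum of the functions $y \mapsto d(y,a)$, each of which is lower semicontinuous by the Fact that the angle metric is lower semicontinuous for the cone topology. The two remaining ingredients are that $\bd X$ is compact for the cone topology and that $G \le \Isom(X)$ acts on $\bd X$ by isometries of the angle metric $d = \angle$.

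For nonemptiness, I would use that the angle metric is bounded above by $\pi$, so $\Hd(\x, A)$ is a bounded real-valued function and $\radius(A) = \inf_{y \in \bd X}\Hd(y,A)$ is finite. A lower semicontinuous real function on a nonempty compact space attains its infimum, so some $y_0 \in \bd X$ has $\Hd(y_0, A) = \radius(A)$; thus $y_0 \in \Centers(A)$. For closedness, note that $\Hd(y,A) \ge \radius(A)$ for all $y$, so $\Centers(A) = \setp{y \in \bd X}{\Hd(y,A) \le \radius(A)}$ is a sublevel set of the lower semicontinuous function $\Hd(\x,A)$; sublevel sets of lower semicontinuous functions are closed (their complements are the open superlevel sets), so $\Centers(A)$ is closed in the cone topology.

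For $G$-invariance, fix $g \in G$ and $y \in \bd X$. Since $g$ is an isometry of $(\bd X, d)$ and $gA = A$, reindexing the supremum by $a \mapsto ga$ gives $\Hd(gy, A) = \sup_{a \in A} d(gy, ga) = \sup_{a \in A} d(y, a) = \Hd(y, A)$. Hence $\Hd(\x, A)$ is $G$-invariant as a function; in particular $\radius(A)$ is unchanged and $y \in \Centers(A)$ if and only if $gy \in \Centers(A)$, so $g \cdot \Centers(A) = \Centers(A)$. I do not anticipate a genuine obstacle here; the only point requiring care is the direction of semicontinuity — it is precisely lower (not upper) semicontinuity of $\Hd(\x, A)$ that makes the infimum attained on the compact space $\bd X$ and that makes the level set $\Centers(A)$ closed.
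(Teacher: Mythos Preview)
Your proposal is correct and follows essentially the same approach as the paper, which simply says ``Invariance is trivial; the rest follows immediately from \lemref{Hausdorff semicontinuity}.'' You have merely unpacked that sentence: compactness of $\bd X$ plus lower semicontinuity of $\Hd(\x,A)$ gives nonemptiness and closedness, and the isometric action gives invariance. Your remark that \lemref{Hausdorff semicontinuity} is stated only for $A \in \closedsets$ (i.e.\ proper $A$) and your direct verification of lower semicontinuity for arbitrary closed $A$ is a nice bit of care the paper glosses over, but it does not constitute a different method.
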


\begin{proof}
Invariance is trivial; the rest follows immediately from Lemma \ref{Hausdorff semicontinuity}. \end{proof}

\section{Minimality and Splittings}
		 \label{minimality and splittings}

Following Caprace \cite{caprace-pcmi}, we say $G \le \Isom(X)$ acts \defn{minimally} on $X$ if $G$ does not preserve any proper nonempty, closed convex subset $X' \subsetneq X$, and we say $G \le \Isom(X)$ is \defn{geometrically dense} if $G$ acts minimally on $X$ without fixed point at infinity.
Similarly, call a subset $Y \subseteq X$ \defn{boundary-minimal} if there is no proper, closed convex subset $Z \subsetneq Y$ such that $\bd Z = \bd Y$.

The following splitting theorem is due to Caprace \cite[Proposition III.10]{caprace-pcmi}.

\begin{proposition}					\label{caprace splitting}
Let $X$ be a proper $\textnormal{CAT(0)}$ space on which $\Isom(X)$ acts cocompactly and minimally.
Then $\bd X$ splits as a nontrivial join if and only if $X$ splits as a nontrivial product. \end{proposition}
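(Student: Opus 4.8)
The plan is to treat the two implications by different means. The implication ``$X$ splits $\Rightarrow$ $\bd X$ splits'' is just the computation of the Tits boundary of a product: if $X = X_1 \times X_2$ with both factors unbounded --- the only case that can occur here, since by minimality of the $\Isom(X)$-action a bounded factor would reduce to a point --- then the CAT(0) product metric induces on the boundary the spherical join metric, so $\bd X$ is isometric to $\bd X_1 * \bd X_2$ (Bridson--Haefliger \cite{bridson}), a nontrivial join because neither $\bd X_i$ is empty. The substantial direction is ``$\bd X$ splits $\Rightarrow$ $X$ splits'', and I would obtain it by first reducing to a boundary-minimal space and then rebuilding the two factors as parallel sets of the geodesic lines pointing into the two join factors.

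So suppose $\bd X = A * B$ with $A$, $B$ nonempty and proper, so $\dT(\xi,\eta) = \pi/2$ for all $\xi \in A$, $\eta \in B$. First reduce to $X$ boundary-minimal: applying Zorn's Lemma to the family of closed convex $Z \subseteq X$ with $\bd Z = \bd X$ yields a boundary-minimal member $Y$ --- a descending chain of such sets has nonempty closed convex intersection still carrying all of $\bd X$, since cocompactness of $\Isom(X)$ keeps the members within uniformly bounded Hausdorff distance of one another --- and since $\bd Y = \bd X$ it suffices to split $Y$, so rename $Y$ to $X$. Being boundary-minimal with cocompact isometry group, $X$ is rich in geodesic lines: every boundary point has an antipode, and every round circle in $\bd X$ bounds an isometrically embedded flat plane (Caprace--Monod, Caprace--Lytchak). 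Given $\xi \in A$ with antipode $\xi'$ (which must lie in $A$, since points of $B$ are only at distance $\pi/2$ from $\xi$) and $\eta \in B$ with antipode $\eta' \in B$, the four points form a round circle of radius $\pi/2$, hence bound a flat $F_{\xi,\eta} \cong \R^2$ that is the CAT(0) product of the line $\xi\xi'$ with the line $\eta\eta'$; these flats realize the join directions inside $X$. I would then let $X_A$ be the union of all geodesic lines of $X$ with both endpoints in $A$ and $X_B$ the analogous union for $B$, check that these are closed convex with $\bd X_A = A$ and $\bd X_B = B$, and --- using the flats $F_{\xi,\eta}$ to commute ``$A$-translations'' past ``$B$-translations'' and the Flat Strip Theorem to split $X$ along such flats --- show that the natural map $X_A \times X_B \to X$ is an isometric embedding onto a closed convex set whose boundary is $A * B = \bd X$; boundary-minimality then forces the image to be all of $X$, giving the nontrivial splitting $X = X_A \times X_B$. (Alternatively, one can deduce the implication from the Caprace--Monod Product Decomposition Theorem, which provides a canonical product decomposition of $X$ under which $\bd X$ decomposes as a join: were $X$ irreducible, $\bd X$ would be join-irreducible.)

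I expect the main obstacle to be this last step: verifying that $X$ really is filled by the flats and lines pointing into the two join factors, so that the parallel-set map surjects onto $X$ and is a global isometry rather than merely a locally isometric embedding. This is exactly where cocompactness and boundary-minimality are indispensable --- without them one recovers only a quasi-dense convex subspace that splits, which is the weakened conclusion of \corref{main radius rigidity corollary}. One must also check that $X_A$ and $X_B$ are independent of the auxiliary choices of $\xi$, $\eta$, and their antipodes; here the rigidity of a join --- every point of one factor at angle exactly $\pi/2$ from every point of the other --- is what makes the construction well posed, and, being canonical, the resulting splitting is automatically preserved by $\Isom(X)$.
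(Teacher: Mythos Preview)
The paper does not prove this proposition; it simply quotes it as \cite[Proposition III.10]{caprace-pcmi}. Your parenthetical alternative --- invoking the Caprace--Monod Product Decomposition Theorem --- is essentially that citation, and is the clean route to the result.

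Your primary sketch, however, has a gap in the reduction step. You pass via Zorn's Lemma to a boundary-minimal $Y \subseteq X$ with $\bd Y = \bd X$ and assert that ``since $\bd Y = \bd X$ it suffices to split $Y$.'' But a splitting of $Y$ does not by itself yield a splitting of $X$: minimality of the $\Isom(X)$-action rules out only proper closed convex \emph{invariant} subspaces, and the $Y$ produced by Zorn need not be $\Isom(X)$-invariant. The correct bridge --- itself part of the Caprace--Monod machinery you invoke in your alternative --- is that the union of all boundary-minimal $Y$ with $\bd Y = \bd X$ is an invariant closed convex product $Y \times Y'$, so minimality forces $Y \times Y' = X$; hence either $Y'$ is nontrivial and you are already done, or $X$ is itself boundary-minimal and no reduction was needed. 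Your subsequent outline (building $X_A$, $X_B$ from lines and flats pointing into the two join factors, and using boundary-minimality to force surjectivity of the product map) does capture the technical core of Caprace's argument, and you correctly flag the surjectivity step as the crux.
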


Of course one would like to know sufficient conditions for minimality.
The following result is due to work of Caprace and Monod \cite{cm-st}.

\begin{proposition}					\label{caprace minimality}
Let $X$ be a proper $\textnormal{CAT(0)}$ space and let $G \le \Isom(X)$.
\begin{enumerate}
\item
If $X$ is geodesically complete and $G$ acts cocompactly then $G$ acts minimally.
\item
If $G$ does not fix a point in $\bd X$ then $G$ stabilizes a nonempty closed convex subspace $X' \subseteq X$ on which its action is geometrically dense.
\item
If $G$ acts cocompactly on $X$ then $G$ stabilizes a quasi-dense, closed convex subspace $X' \subseteq X$ on which its action is minimal.
\end{enumerate}
\end{proposition}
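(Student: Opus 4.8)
The plan is to recover these three statements (due to Caprace and Monod \cite{cm-st}) from two standard facts about proper CAT(0) spaces---nearest-point projection onto a complete convex subset is $1$-Lipschitz, and $x \mapsto d(x,Y)$ is convex when $Y$ is convex---together with one elementary observation: if $X = G \cdot K$ with $K$ compact and $Y \subseteq X$ is nonempty and $G$-invariant, then $Y$ meets $K$ (writing $y \in Y$ as $y = gk$ with $g \in G$, $k \in K$ gives $k = g^{-1}y \in Y \cap K$), so $\sup_{x \in X} d(x,Y) \le \diam K$; in particular, when $G$ acts cocompactly, every nonempty, closed convex, $G$-invariant subset of $X$ is quasi-dense.

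For (1): if $X$ is geodesically complete and $G$ acts cocompactly but not minimally, choose a proper, nonempty, closed convex, $G$-invariant $Y \subsetneq X$, a point $x_1 \in X \setminus Y$, and its nearest-point projection $p \in Y$. Extending $[p,x_1]$ past $x_1$ to a geodesic ray $c \colon [0,\infty) \to X$ with $c(0) = p$, the function $t \mapsto d(c(t), Y)$ is convex, vanishes at $0$, and is positive at $t = d(p,x_1)$, hence unbounded---contradicting $d(\cdot, Y) \le \diam K$. (One could instead deduce (1) from (3) by running this ray argument on the quasi-dense $X'$ that (3) provides.)

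For (2) and (3): the plan is to apply Zorn's Lemma to the poset $\mathcal{P}$ of nonempty, closed convex, $G$-invariant subsets of $X$, ordered by inclusion. A $\subseteq$-minimal element $X' \in \mathcal{P}$ is precisely one on which $G$ acts minimally, since a proper, nonempty, closed convex, $G$-invariant subset of $X'$ would again belong to $\mathcal{P}$; under cocompactness such an $X'$ is automatically quasi-dense by the first paragraph, which is (3), and in the setting of (2) one has $\bd X' \subseteq \bd X$ $G$-equivariantly, so $G$ still fixes no point of $\bd X'$ and therefore acts geometrically densely on $X'$. The point that needs work is that every chain in $\mathcal{P}$ admits a lower bound, equivalently that a decreasing chain $\{Y_\alpha\}$ has $\bigcap_\alpha Y_\alpha \neq \emptyset$ (this intersection then lies in $\mathcal{P}$). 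In the cocompact case of (3) this is immediate: pick $k_\alpha \in Y_\alpha \cap K$, pass to a subnet $k_\alpha \to k^* \in K$, and use closedness of each $Y_\alpha$ to conclude $k^* \in \bigcap_\alpha Y_\alpha$. In the case of (2), fix a basepoint $x_0$ and let $p_\alpha$ be its projection to $Y_\alpha$; if $d(x_0, p_\alpha)$ stays bounded then a subnet of $(p_\alpha)$ converges into $\bigcap_\alpha Y_\alpha$ as before, while if $d(x_0, p_\alpha) \to \infty$ then the $p_\alpha$ converge in the cone topology to a point $\xi \in \bd X$, and since $g p_\alpha = \pi_{Y_\alpha}(g x_0)$ (by $G$-invariance of $Y_\alpha$) and projection is $1$-Lipschitz, $d(p_\alpha, g p_\alpha) \le d(x_0, g x_0)$ for all $g \in G$; hence $g p_\alpha \to \xi$ too, so $g\xi = \xi$ for all $g \in G$, contradicting the hypothesis of (2). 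Thus every chain has a lower bound, Zorn applies, and we are done.

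The main obstacle is the one non-elementary input above: the cone-topology convergence of the projections $p_\alpha$ to a boundary point when $d(x_0, p_\alpha) \to \infty$. This is a known consequence of the CAT(0) comparison estimates governing how nearest-point projections onto nested convex sets drift, and I would cite it (from \cite{cm-st}, or \cite{bridson}) rather than reprove it; the remaining ingredients---existence of projections, extraction of convergent subnets, and uniqueness of cone-topology limits---are supplied by properness of $X$ and compactness of $\cl X$.
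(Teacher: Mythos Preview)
The paper does not prove this proposition at all; it is stated with attribution to Caprace and Monod \cite{cm-st} and used as a black box. So there is no ``paper's own proof'' to compare against, and what you have written is a genuine proof where the paper offers none.

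Your argument is essentially correct and follows the standard route. Two small remarks. First, the step you single out as the ``main obstacle''---that the projections $p_\alpha$ converge in the cone topology when $d(x_0,p_\alpha)\to\infty$---is not actually needed in full strength. Since the chain is totally ordered and $Y_\beta\subseteq Y_\alpha$ forces $d(x_0,p_\beta)\ge d(x_0,p_\alpha)$, the net $d(x_0,p_\alpha)$ is monotone; in the unbounded case it tends to infinity, and by compactness of $\cl X$ some subnet converges to a point $\xi\in\bd X$. Your bound $d(p_\alpha,gp_\alpha)\le d(x_0,gx_0)$ then already forces $g\xi=\xi$ along that subnet, which is the desired contradiction. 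So you can drop the appeal to an external citation here and keep the proof self-contained. Second, in the bounded case of (2) you only extract a subnet of the $p_\alpha$, and that is all you need (the limit lies in every $Y_\beta$ because the subnet is eventually in each $Y_\beta$); there is no need to argue full convergence there either.

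With those simplifications your proof of all three parts is clean and complete, and it is a reasonable reconstruction of the Caprace--Monod argument the paper is citing.
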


(A subset $Y \subseteq X$ is \defn{quasi-dense} if there is some $R > 0$ such that every $x \in X$ has $d(x, Y) \le R$.
Clearly, every quasi-dense subset $Y$ of $X$ has $\bd Y = \bd X$.)

\section{Proof of \thmrefstar{main radius rigidity} and \correfstar{main radius rigidity corollary}}

For a CAT($1$) space $Y$, let $\dim Y$ denote the geometric dimension of $Y$ (see \cite{kleiner}).  Thus, following our convention, throughout this paper, $\dim \bd X$ will always denote the geometric dimension of the Tits boundary of $X$.

We now prove \thmref{main radius rigidity}.
It will follow from the following lemma.
Note that by Kleiner \cite[Theorem C]{kleiner}, if $\Isom(X)$ acts cocompactly on $X$ then $\dim \bd X$ is finite.

\begin{theorem} [\thmref{main radius rigidity}]		\label{minimal radius rigidity}
Let $X$ be a proper \textnormal{CAT($0$)} space with finite-dimensional Tits boundary, and let $G \le \Isom(X)$ be geometrically dense.
If $\bd X$ has a nonempty $G$-invariant subset of circumradius $\le \frac{\pi}{2}$
then $X$ splits as a nontrivial product. \end{theorem}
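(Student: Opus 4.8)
The plan is to reduce the conclusion to a spherical join decomposition of the Tits boundary and then feed that into the splitting machinery. First I would dispose of the degenerate case: if the given $G$-invariant set $A \s \bd X$ has circumradius strictly less than $\tfrac{\pi}{2}$, then, $\bd X$ being a complete CAT($1$) space, $A$ has a \emph{unique} circumcenter, which is therefore canonical and hence $G$-fixed; this contradicts the assumption that $G$ is geometrically dense (no fixed point at infinity). So we may assume $\radius(A) = \tfrac{\pi}{2}$ exactly.

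Next I would pass to the center set $C := \Centers(A)$, which by \corref{invariance of centers} is a nonempty, closed (hence compact), $G$-invariant subset of $\bd X$. Two elementary observations: (i) every point of $C$ lies within $\tfrac{\pi}{2}$ of every point of $A$, so $\radius(C) \le \tfrac{\pi}{2}$ as well; and (ii) $C$ is $\pi$-convex, for if $c, c' \in C$ have Tits distance $\ell < \pi$ with midpoint $m$, then CAT($1$) comparison together with the spherical median formula gives, for each $a \in A$, $\cos d(m,a) \ge \bigl(\cos d(c,a) + \cos d(c',a)\bigr) / \bigl(2\cos\tfrac{\ell}{2}\bigr) \ge 0$, whence $\Hd(m, A) \le \tfrac{\pi}{2}$; since always $\Hd(m,A) \ge \radius(A) = \tfrac{\pi}{2}$, we conclude $m \in C$. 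Thus we have a nonempty, closed, $\pi$-convex, $G$-invariant set $C$ of circumradius $\le \tfrac{\pi}{2}$. The heart of the argument is to extract from this data a nontrivial spherical join decomposition $\bd X = \bd X_1 * \bd X_2$. This is where finite-dimensionality of $\bd X$ is indispensable: the circumcenter calculus available in finite-dimensional CAT($1$) spaces (Balser--Lytchak, Lytchak, Caprace--Lytchak) shows that a finite-dimensional CAT($1$) space carrying an isometry group with no global fixed point and an invariant subset of circumradius $\le \tfrac{\pi}{2}$ splits as a nontrivial spherical join --- roughly, one iterates the passage $A \rightsquigarrow \Centers(A)$ and replaces sets by their Tits-closed convex hulls to obtain an orthogonal pair of nonempty closed convex subsets that together span $\bd X$, the absence of a fixed point and the equality $\radius(A) = \tfrac{\pi}{2}$ ruling out the degenerate possibilities that a factor is a single point or empty.

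Finally, with a nontrivial spherical join decomposition of $\bd X$ in hand, I would conclude with the splitting theorem: a proper CAT($0$) space admitting a geometrically dense isometry group splits as a nontrivial product precisely when its boundary splits as a nontrivial spherical join (the geometrically dense counterpart of \propref{caprace splitting}, building on Caprace and Caprace--Monod). Hence $X$ splits nontrivially, as claimed. The main obstacle is the middle step: the circumradius and $\pi$-convexity bookkeeping is soft, but converting the bare existence of an invariant radius-$\tfrac{\pi}{2}$ subset into an \emph{honest} orthogonal splitting of $\bd X$ rests essentially on finite-dimensionality and the structure theory of Tits boundaries, and should not be expected without that hypothesis.
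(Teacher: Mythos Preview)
Your reduction to $\radius(A) = \tfrac{\pi}{2}$ and passage to $C = \Centers(A)$ match the paper, and your median-inequality verification that $C$ is $\pi$-convex is fine (though strictly you only checked midpoints; iterate and use closedness of $C$ to get the full geodesic). But from that point on the two arguments diverge, and your ``middle step'' --- producing a nontrivial spherical join of $\bd X$ from the existence of a $G$-invariant radius-$\tfrac{\pi}{2}$ set --- is precisely where your proposal stops being a proof. Iterating $A \rightsquigarrow \Centers(A)$ does not by itself terminate in an orthogonal pair spanning $\bd X$, and the Balser--Lytchak and Caprace--Lytchak results you cite do not assert that a finite-dimensional CAT($1$) space with a fixed-point-free group action and an invariant set of circumradius $\le \tfrac{\pi}{2}$ must be a join. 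You yourself flag this as the main obstacle; as written it is a genuine gap, not a citation.

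The paper sidesteps this entirely by leaving the Tits boundary and working in $X$. The key move: for each $a \in A$ take the closed horoball $H_a$ through a fixed $x \in X$, and set $Y = \bigcap_{a \in A} H_a$. Since $\bd H_a = \cl{B}_T(a,\tfrac{\pi}{2})$, one checks directly that $\bd Y = C$; in particular $C$ is the full boundary of a closed convex subset of $X$ (this is how the paper obtains $\pi$-convexity, incidentally). Now Balser--Lytchak gives $\radius^{C}(C) > \tfrac{\pi}{2}$ (no fixed point), and Caprace--Monod's structure theory of boundary-minimal subsets then produces a boundary-minimal $Z$ with $\bd Z = C$ and shows the union of all such $Z$ is a product $Z \times Z'$. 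This union is $G$-invariant, hence equals $X$ by minimality. So the product splitting of $X$ appears directly, with no need to first manufacture a join decomposition of $\bd X$ --- and, importantly, with no appeal to a ``geometrically dense'' analogue of \propref{caprace splitting}, which as stated in the paper requires cocompactness, a hypothesis you do not have here.
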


\begin{proof}
Assume $A \subset \bd X$ is nonempty, $G$-invariant, and has $\radius^{\bd X} (A) \le \frac{\pi}{2}$.
If we had $\radius^{\bd X} (A) < \frac{\pi}{2}$ then the unique circumcenter of $A$ would be fixed by $G$, so $\radius^{\bd X} (A) = \frac{\pi}{2}$.
By semicontinuity of the angle metric, we may assume $A$ is closed, hence $A \in \CIG$.
Let $C = \Centers^{\bd X} (A)$; by Corollary \ref{invariance of centers}, $C \in \CIG$.

Fix $x \in X$.  For $a \in A$, let $H_a$ be the closed horoball with respect to $a$ such that $x$ is in the boundary horosphere.
Note that $H_a$ is closed and convex.
Moreover, $C \subseteq \cl{B}_T (a, \frac{\pi}{2})$ by definition of $C$, and since
$\bd H_a = \cl{B}_T (a, \frac{\pi}{2})$ \cite[Lemma 3.5]{cm-st}, 
we have $C \subseteq \bd H_a$ for all $a \in A$.

Now let $Y = \bigcap_{a \in A} H_a$.
Then
\[C = \Centers^{\bd X} (A) = \bigcap_{a \in A} \cl{B}_T (a, \frac{\pi}{2}) = \bigcap_{a \in A} \bd H_a \supseteq \bd Y.\]
Also, for each $c \in C$ and $a \in A$, we know that $C \subseteq \bd H_a$, while $x \in Y$, and thus each geodesic ray $[x, c) \subseteq H_a$ by convexity of $H_a$.
Hence $[x,c) \subseteq Y$ for each $c \in C$, and therefore $C \subseteq \bd Y$.
Thus we see that $Y$ is closed, convex, and $\bd Y = C$.

In particular, $C$ is a closed, $\pi$-convex subset of $\bd X$.
Since $C$ is $G$-invariant but contains no fixed points, by Balser and Lytchak \cite[Proposition 1.4]{bl-centers}, the intrinsic circumradius $\radius^{C} (C) > \frac{\pi}{2}$.
So by Caprace and Monod \cite[Proposition 3.6]{cm-st} (cf. \cite[Theorem II.7]{caprace-pcmi}), $C = \bd Y = \bd Z$ for some boundary-minimal closed convex $Z \subset X$; furthermore the union of all such spaces is a closed convex set $Z_0 = Z \times Z'$.
Since $C$ is $G$-invariant, each $g \in G$ has $g Z$ boundary-minimal, closed convex, with $\bd (g Z) = C$.
Thus $G$ preserves the fibers of $Z_0$ and therefore $Z_0$ itself.
Hence $Z_0 = X$ by minimality.
\end{proof}

\begin{corollary} [\corref{main radius rigidity corollary}]	\label{radius rigidity}
Let $X$ be a proper \textnormal{CAT($0$)} space and let $G \le \Isom(X)$ act cocompactly on $X$ without fixed point at infinity.
If $\bd X$ has a nonempty $G$-invariant subset of circumradius $\le \frac{\pi}{2}$
then a quasi-dense, closed convex subspace $X' \subseteq X$ splits as a nontrivial product. \end{corollary}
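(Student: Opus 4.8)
The plan is to deduce Corollary \ref{radius rigidity} from Theorem \ref{minimal radius rigidity} by first replacing $X$ with a good invariant convex subspace. First I would invoke Proposition \ref{caprace minimality}(2): since $G$ acts without fixed point at infinity, $G$ stabilizes a nonempty closed convex $X' \subseteq X$ on which its action is geometrically dense. The only subtlety is that $X'$ is produced from the no-fixed-point hypothesis alone, and we separately need $X'$ to be quasi-dense so that $\bd X' = \bd X$; this is where cocompactness enters. I would note that the construction in \cite{cm-st} that yields the geometrically dense subspace can be taken so that $X'$ is quasi-dense when $G$ acts cocompactly (indeed one first passes to a quasi-dense minimal $X''$ as in part (3), and then inside $X''$ the absence of a fixed point at infinity lets one pass to a geometrically dense subspace that is still quasi-dense since $X''$ itself has no fixed point at infinity once we are on a minimal set); alternatively, one observes directly that a closed convex $G$-invariant subspace of a space admitting a cocompact action on which $G$ still acts cocompactly must be quasi-dense. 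Either way we arrive at a quasi-dense, closed convex, $G$-invariant $X' \subseteq X$ with $G|_{X'}$ geometrically dense.

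Next I would transfer the hypothesis and conclusion across this reduction. Since $X'$ is quasi-dense, $\bd X' = \bd X$ as sets, and the angle metric on $\bd X'$ agrees with that on $\bd X$ (boundaries of quasi-dense convex subspaces are isometric as Tits boundaries). Hence the nonempty $G$-invariant subset $A \subseteq \bd X$ of circumradius $\le \tfrac{\pi}{2}$ is also a nonempty $G$-invariant subset of $\bd X'$ of circumradius $\le \tfrac{\pi}{2}$. Also, $\dim \bd X' = \dim \bd X$, which is finite by Kleiner \cite[Theorem C]{kleiner} since $\Isom(X)$ acts cocompactly on $X$ (equivalently, apply the remark preceding Theorem \ref{minimal radius rigidity}). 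So Theorem \ref{minimal radius rigidity} applies verbatim to $X'$ with the group $G$, and yields that $X'$ splits as a nontrivial product.

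Finally I would assemble the pieces: $X' \subseteq X$ is closed, convex, quasi-dense, and splits as a nontrivial product, which is exactly the assertion of the corollary. The main obstacle is the first step — verifying that the geometrically dense subspace provided by Proposition \ref{caprace minimality} can simultaneously be taken quasi-dense under the cocompactness hypothesis; once that bookkeeping is done, the rest is a direct appeal to Theorem \ref{minimal radius rigidity} together with the standard fact that quasi-dense convex subspaces have the same Tits boundary.
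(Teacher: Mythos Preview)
Your approach is correct and is exactly the argument the paper intends: the corollary is stated without proof, as an immediate consequence of \thmref{minimal radius rigidity} via \propref{caprace minimality}. Your reduction can be streamlined, though: the quasi-dense minimal $X'$ produced by \propref{caprace minimality}(3) already has $\bd X' = \bd X$, so the no-fixed-point hypothesis transfers and the $G$-action on $X'$ is geometrically dense outright---no second passage to a smaller subspace is needed.
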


As noted in the introduction,
if $G \le \Isom(X)$ is a unimodular closed subgroup (for instance, if $G$ is discrete), and $G$ acts cocompactly, then the presence of fixed points in $\bd X$ already indicates a nontrivial Euclidean factor of a quasi-dense, closed convex $X' \subseteq X$ \cite[Theorem M]{cm-fpa}.

\section{Limiting Operations}

\begin{standing hypothesis}
In the remainder this paper, $X$ will be a proper CAT($0$) space, and $\G$ will be a discrete group acting geometrically (that is: properly discontinuously, cocompactly, and by isometries) on $X$. \end{standing hypothesis}

\subsection{Limit Operators}

The following construction comes from Guralnik and Swenson \cite{gs}:  Let $G$ be a discrete group acting on a compact Hausdorff space $Z$.  Denote by $\beta G$ the Stone--\VV{C}ech compactification of $G$.
For each $z \in Z$, extend the orbit map $\rho_z \colon g \mapsto g z$ and, for $\w \in \beta G$, define $T^{\w} \colon Z \to Z$ by $T^{\w} z = (\beta \rho_z)(\w)$.
Thus, for fixed $z \in Z$, the map $\w \mapsto T^{\w} z$ is a continuous map of $\beta G$ into $Z$ (although $z \mapsto T^{\w} z$ may not be continuous for fixed $\w \in \beta G$).

Similarly, $\beta G$ has a semi-group structure:  For each $g \in G$ the left multiplication map $L_g \colon h \mapsto gh$ on $G$ extends to a continuous map $\beta L_g \colon \w \mapsto g \cdot \w$ on $\beta G$.  Now we have jointly continuous multiplication $G \times \beta G \to \beta G$; thinking of this as a left action by $G$ on $\beta G$, we can extend it to a left-continuous multiplication map $\beta G \times \beta G \to \beta G$ (that is, $(\nu, \w) \mapsto \nu \cdot \w$ is continuous in $\nu$ for fixed $\w \in \beta G$).

The inverse map $g \mapsto g^{-1}$ on $G$ extends to a continuous involution $S \colon \beta G \to \beta G$; however, $\w \cdot S\w = 1$ only for $\w \in G$.
On the other hand, $T^{\nu\cdot \w} = T^{\nu} T^{\w}$ for all $\nu, \w \in \beta G$, hence the operators $T^{\w}$ extend the group action by $G$ on $Z$ to a semi-group action by $\beta G$ on $Z$ (by the often discontinuous maps $T^{\w}$).

Now let $\G$ be a discrete group acting properly discontinuously and by isometries on a proper CAT($0$) space $X$.
Since $\cl X = X \cup \bd X$ is compact Hausdorff, the above construction gives us a family of operators $T^{\w}$ on $\cl X$.
Guralnik and Swenson observe that since every $\g \in \G$ acts by isometries on $\bd X$, every $T^{\w}$ is in fact $1$-Lipschitz on $\bd X$ by semicontinuity of $\angle$.

\subsection{Folding}

\begin{definition}
Call a metric space $K$ a \defn{sphere} if it is isometric to a Euclidean sphere of radius one.
Call a sphere $K \subset \bd X$ \defn{round} if $\dim K = \dim \bd X$. \end{definition}

Bennett, Mooney, and Spatzier \cite[Corollary 2.5]{bms} observe that, as a result of work of Papasoglu and Swenson \cite{ps}, the union of round spheres is dense in $\bd X$.  Their proof actually shows the following.

\begin{lemma}						\label{density of round spheres}
Suppose $K \subset \bd X$ is a round sphere.  Then $\G K$ is dense in $\bd X$. \end{lemma}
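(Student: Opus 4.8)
The plan is to prove that if $K \subset \bd X$ is a round sphere then its $\G$-orbit is dense in $\bd X$. First I would invoke the theorem of Papasoglu and Swenson \cite{ps} that is already being referenced here: since $\G$ acts geometrically on $X$, the group acts as a \emph{convergence-like} / minimal action on the boundary in the appropriate sense, and more precisely the result of \cite{ps} (as packaged by Bennett, Mooney, and Spatzier \cite[Corollary 2.5]{bms}) gives that the union of \emph{all} round spheres in $\bd X$ is dense in $\bd X$ with respect to the cone topology. So it suffices to show that every round sphere lies in the closure of $\G K$ for a single fixed round sphere $K$; equivalently, that all round spheres form a single orbit closure.

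The key step is then a \emph{transitivity up to closure} argument for round spheres. I would argue as follows. Let $K, K'$ be two round spheres. Using the density of round spheres (so that points of $K'$ can be approximated by points lying on various round spheres) together with the fact that round spheres are top-dimensional and hence, by finite-dimensionality of $\bd X$, locally rigid, one shows that any round sphere can be moved arbitrarily close (in the Hausdorff sense with respect to the cone topology) to any other by the $\G$-action. The mechanism is the cocompactness of the $\G$-action on $X$: pick basepoints, translate configurations back into a fixed compact region of $X$, and extract that nearby round spheres through nearby interior directions are $\G$-translates of one another up to small error. Concretely, the proof in \cite{bms} already produces, for a dense set of points $\xi \in \bd X$, a round sphere through $\xi$ as a limit of $\G$-translates of $K$; pushing this through for all $\xi$ in a dense subset of an arbitrary round sphere $K'$, and using that $K'$ is determined by (finitely many, or a dense subset of) its points, yields $K' \subseteq \overline{\G K}$.

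Having established $\overline{\G K} \supseteq K'$ for every round sphere $K'$, I would finish by combining this with the Bennett--Mooney--Spatzier density statement: $\overline{\G K} \supseteq \bigcup \{ K' : K' \text{ a round sphere}\} = \bd X$ (closure in the cone topology), which is exactly the claim. The statement that $\G K$ itself is dense, rather than merely that its cone-closure contains all round spheres, is immediate once we know the cone-closure contains a cone-dense set.

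The main obstacle I anticipate is the transitivity step: making precise the sense in which one round sphere can be $\G$-translated close to another. This requires being careful about which topology one uses for ``close'' (the cone topology on $\bd X$, not the Tits topology) and about the fact that round spheres, being top-dimensional in a finite-dimensional $\bd X$, cannot degenerate in the limit; the finite-dimensionality hypothesis on $\bd X$ and the cocompactness of the action are exactly what prevent a limit of round spheres from collapsing to a lower-dimensional object. In practice I expect the cleanest route is to cite the internals of the \cite{ps}/\cite{bms} arguments directly, as the author signals by saying ``their proof actually shows the following,'' rather than to reprove the transitivity from scratch.
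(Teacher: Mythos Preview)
Your proposal is correct and ultimately lands on the same approach as the paper, which offers no argument beyond the remark that the proof of \cite[Corollary 2.5]{bms} already establishes density of $\G K$ for a single round sphere $K$, not merely density of the union of all round spheres.

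The ``transitivity up to closure'' layer you build is redundant, however. You yourself note that the \cite{bms} argument produces, for a cone-dense set of points $\xi \in \bd X$, a round sphere through $\xi$ arising as a limit of $\G$-translates of $K$; but this already says that a cone-dense set lies in $\overline{\G K}$, hence $\overline{\G K} = \bd X$ immediately. There is no need to first show $K' \subseteq \overline{\G K}$ for every round sphere $K'$ and then invoke density of the union of round spheres---that route passes through the conclusion on the way to itself. Strip out the transitivity step and your proposal coincides exactly with what the paper does: defer to the internals of the \cite{ps}/\cite{bms} proof.
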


\begin{definition}
Let $K \subset \bd X$ be a sphere, and suppose $\w \in \BG$ has the property that $T^{\w} (\bd X) = T^{\w} K$ and $T^{\w} \res{K}$ is an isometry.  We say that $\w$ \defn{folds} $K$ onto $T^{\w} K$ (or, $\w$ is a \defn{$K$-folding}), and we call $T^{\w} K$ a \defn{folded} sphere. \end{definition}

It turns out that such spheres do exist.

\begin{theorem} [Theorem A of \cite{gs}]
Let $\G$ be a group acting geometrically on a proper $\textnormal{CAT(0)}$ space $X$.  Then $\bd X$ contains a folded round sphere. \end{theorem}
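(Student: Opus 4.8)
The plan is to run a minimization over round spheres, using the limit operators $T^{\w}$ as the folding maps: each $T^{\w}$ is $1$-Lipschitz on the Tits boundary, so folding can only compress $\bd X$, and one wants to compress it all the way onto a round sphere without collapsing that sphere. Fix a round sphere $K \s \bd X$ (these exist by the work cited before \lemref{density of round spheres}). For $\w \in \BG$ with $T^{\w}\res{K}$ an isometry, $S := T^{\w} K$ is again a round sphere (isometric to $K$, of dimension $\dim \bd X$), and by \lemref{density of round spheres} the orbit $\G S$ is cone-dense in $\bd X$. Define the \emph{residual width}
\[ w(K,\w) := \sup_{z \in \bd X} d\bigl(T^{\w} z,\, T^{\w} K\bigr) \in [0,\pi], \]
the (one-sided) Hausdorff distance, for the angle metric, from $T^{\w}(\bd X)$ down to the round sphere $T^{\w} K$, and put $w_0 = \inf w(K,\w)$ over all round spheres $K$ and all such $\w$. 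For $\gamma \in \G$ we have $w(K,\gamma) = \sup_z d(z,K) \le \pi$, so the infimum is over a nonempty set; and $w(K,\w) = 0$, when attained, is exactly a folded round sphere, since then $d(T^{\w} z, T^{\w} K) = 0$ for all $z$, so $T^{\w} z \in T^{\w} K$ (a round sphere is compact, hence closed, in the angle metric), giving $T^{\w}(\bd X) = T^{\w} K$. Thus it suffices to prove $w_0 = 0$ and that it is attained.

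The first half is monotonicity of residual width under further folding. If $T^{\w}\res{K}$ is an isometry with image $S = T^{\w} K$ and $T^{\nu}\res{S}$ is an isometry, then $T^{\nu\w}\res{K} = (T^{\nu}\res{S}) \circ (T^{\w}\res{K})$ is an isometry by the semigroup identity $T^{\nu\w} = T^{\nu} T^{\w}$, and for every $z \in \bd X$,
\[ d\bigl(T^{\nu\w} z,\, T^{\nu\w} K\bigr) = d\bigl(T^{\nu}(T^{\w} z),\, T^{\nu} S\bigr) \le d\bigl(T^{\w} z,\, S\bigr) \]
because $T^{\nu}$ is $1$-Lipschitz on $\bd X$; hence $w(K,\nu\w) \le w(K,\w)$. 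So foldings may be composed freely, only decreasing residual width.

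The real work is to show that a positive residual width can always be strictly decreased. Suppose $w(K,\w) = w > 0$ with $S = T^{\w} K$ a round sphere and $Y = T^{\w}(\bd X)$; let $E \s Y$ be the (nonempty, for small $\epsilon$) set of points at angle distance $\ge w - \epsilon$ from $S$. Since $\G S$ is cone-dense in $\bd X$, one selects $\gamma_i \in \G$ with $\gamma_i S$ cone-accumulating onto a neighbourhood of $E$ and passes to a limit operator $T^{\mu}$ along a suitable ultrafilter. The aim is to arrange that $T^{\mu}$ is isometric on $S$ (or on a full-dimensional subsphere of it) while genuinely contracting the Tits directions transverse to $S$ --- morally, $\gamma_i$ should translate along the flat of $X$ whose boundary sphere is $S$, pulling distant boundary points toward $S$ while nearly stabilizing $S$. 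Combined with the monotonicity above --- which already prevents any point from ending up farther than $w$ from the new sphere --- this yields an isometric folding of $S$ with residual width $< w$, and composing with $\w$ contradicts the minimality of $w_0$; so $w_0 = 0$. Finally, to pass from $w_0 = 0$ to an actual folding one takes a limit in $\BG$; since ``$T^{\w}\res{S}$ is an isometry'' is not visibly a closed condition (lower semicontinuity of $\angle$ controls the opposite inequality), the clean route is to work inside a minimal left ideal of the compact semigroup $\BG$ and take the limit through an idempotent $u$, so that $T^{u}$ is idempotent on $\bd X$, hence the identity --- an isometry --- on its own image, which the width estimate identifies as a round sphere that $u$ then folds.

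The main obstacle is this geometric step: producing a limit operator that contracts transversally to $S$ but not along $S$. Everything else is bookkeeping with the $1$-Lipschitz property, the semigroup identity $T^{\nu\w} = T^{\nu} T^{\w}$, the compactness of $\BG$ and existence of idempotents, and the density of orbits of round spheres; but the existence of a ``transversally contracting, $S$-preserving'' ultrafilter --- the interplay between the flat structure of $X$ and the accumulation of $\G$-translates of round spheres --- is where the real content lies.
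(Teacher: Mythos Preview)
The paper does not prove this statement at all: it is quoted verbatim as Theorem~A of \cite{gs} and immediately followed by the remark ``In fact, every round sphere has a folding,'' with no argument. So there is no proof in the paper to compare against; any assessment has to be of your proposal on its own terms (or against the original Guralnik--Swenson argument, which is outside the present paper).

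On its own terms, your proposal is not a proof but an outline with an explicitly acknowledged gap. The monotonicity of residual width under composition is fine, and the idempotent trick for producing an actual folding once $w_0=0$ is the right kind of idea. But the decisive step --- strictly decreasing a positive residual width by finding a limit operator that is isometric on the round sphere $S$ yet genuinely contracts in the transverse Tits directions --- is asserted, not proved. You describe the intended mechanism (``translate along the flat of $X$ whose boundary sphere is $S$'') but give no reason $\Gamma$ contains such elements, nor any argument that a cone-limit of $\gamma_i S$ accumulating on a neighbourhood of $E$ forces $T^{\mu}$ to be isometric on $S$; lower semicontinuity of $\angle$ goes the wrong way for that. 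You yourself flag this as ``where the real content lies,'' and that is accurate: without it, nothing prevents $w_0>0$. There is also a smaller loose end even at the limit stage: passing to an idempotent $u$ in a minimal left ideal gives $T^{u}$ idempotent, hence the identity on its image, but you still need to argue that this image is a round sphere --- i.e.\ that dimension is not lost in the limit --- which again uses more than bookkeeping.
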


In fact, every round sphere has a folding.

\subsection{$\pi$-convergence}

\begin{definition}
Let $\w \in \BG \setminus \G$.  Then the \defn{attracting point} $\w^+ = T^{\w}(x) \in \bd X$ and \defn{repelling point} $\w^- = T^{S \w}(x) \in \bd X$ of $\w$ do not depend on $x \in X$. \end{definition}

Papasoglu and Swenson's $\pi$-convergence theorem \cite[Lemma 19]{ps} can be restated in terms of $\BG$:

\begin{theorem} [Theorem 3.3 of \cite{gs}]
Let $\G$ be a group acting geometrically on a proper \textnormal{CAT($0$)} space $X$, and let $\w \in \BG \setminus \G$.  Then
\[d(p, \w^-) + d(T^{\w} p, \w^+) \le \pi\]
for all $p \in \bd X$. \end{theorem}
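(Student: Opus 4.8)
The plan is to deduce the statement directly from Papasoglu and Swenson's original $\pi$-convergence lemma \cite[Lemma 19]{ps}, which is stated for sequences of distinct elements of $\G$; essentially all that remains is to translate between such sequences and the point $\w \in \BG$. Fix $\w \in \BG \setminus \G$, a point $p \in \bd X$, and a basepoint $x \in X$. First I would record the elementary facts about $\w$. Since $\w \notin \G$ and $\G$ acts properly discontinuously, for every $R > 0$ all but finitely many $\g \in \G$ satisfy $d(x, \g x) > R$, so $\w^+ = T^\w x$ and $\w^- = T^{S\w} x$ both lie in $\bd X$; and since $\bd X$ is closed in $\cl X$ and $\g p \in \bd X$ for every $\g \in \G$, also $T^\w p \in \bd X$. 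Moreover, for any net $(\g_i)$ in $\G$ converging to $\w$ in $\BG$, continuity of the Stone--\VV{C}ech extensions of the orbit maps gives $\g_i x \to \w^+$, $\g_i^{-1}x \to \w^-$, and $\g_i p \to T^\w p$ in the cone topology.

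The one point needing care is that $T^\w$ is defined abstractly through $\BG$, whereas \cite[Lemma 19]{ps} speaks of limits of genuine sequences, and no point of $\BG \setminus \G$ is itself a sequential limit of elements of $\G$. This is circumvented by passing to the metrizable compactum $\cl X \times \cl X \times \cl X$: the map $\G \to \cl X^3$, $\g \mapsto (\g x, \g^{-1} x, \g p)$, extends continuously to $\BG$ and sends $\w$ to $(\w^+, \w^-, T^\w p)$, so this triple lies in the closure of the image of $\G$. As $\cl X^3$ is metrizable, that closure is its sequential closure, so there is a sequence $(\g_n)$ in $\G$ with $\g_n x \to \w^+$, $\g_n^{-1}x \to \w^-$, and $\g_n p \to T^\w p$ in the cone topology. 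Since $\w^+ \in \bd X$ while the $\g_n x$ lie in the discrete orbit $\G x$, we have $d(x, \g_n x) \to \infty$; in particular each element of $\G$ occurs only finitely often among the $\g_n$, so after passing to a subsequence we may assume the $\g_n$ are distinct.

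Now I would simply apply \cite[Lemma 19]{ps} to the sequence $(\g_n)$: its $\pi$-convergence conclusion says that, with $\w^- = \lim \g_n^{-1} x$ and $\w^+ = \lim \g_n x$, any subsequential limit $q$ of $(\g_n p)$ satisfies $d(p, \w^-) + d(q, \w^+) \le \pi$. Here $\g_n p$ already converges, to $q = T^\w p$, so this is exactly the asserted inequality, and since $p \in \bd X$ was arbitrary we are done.

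I expect the only genuine obstacle to be the bookkeeping in the middle paragraph: checking that the "degenerate limit map" attached to the sequence $(\g_n)$ in \cite{ps} agrees with the semigroup operator $T^\w$ on $\bd X$, and that the attracting and repelling points of \cite[Lemma 19]{ps} coincide with $\w^+$ and $\w^-$ as defined via $\BG$. Once these identifications are in place through the metrizability argument, all of the geometric content — a CAT(0) comparison estimate at the basepoint, applied to the triangles $\{\g_n^{-1}x,\, x,\, \sigma(t)\}$ along the ray $\sigma = [x, p)$ and pushed forward by $\g_n$ — is already contained in \cite[Lemma 19]{ps}, and nothing further needs to be proved.
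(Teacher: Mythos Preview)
The paper does not give its own proof of this statement: it is stated as \cite[Theorem 3.3]{gs} and introduced as a restatement of Papasoglu--Swenson's $\pi$-convergence \cite[Lemma 19]{ps} in the language of $\BG$. Your proposal correctly supplies exactly this translation---extracting a genuine sequence from $\w$ via the metrizability of $\cl X^3$ and then invoking \cite[Lemma 19]{ps}---so it is both correct and in the same spirit as what the paper (implicitly, by citation) relies on.
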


\begin{remark}
Papasoglu and Swenson actually prove a slightly stronger statement in terms of the Tits metric $\dT$ on $\bd X$,
but in this paper we will focus exclusively on the angle metric $d = \angle$. \end{remark}

Points $p,q \in \bd X$ are said to be \defn{$\G$-dual} if there exists some $\w \in \BG \setminus \G$ such that $\w^+ = p$ and $\w^- = q$.  For $p \in \bd X$, let $\D(p)$ denote the set of points $q \in \bd X$ such that $p,q$ are $\G$-dual.

We will use the following two corollaries of the theorem.

\begin{corollary}					\label{pi-convergence for CI}
Let $A \in \CI$ and suppose $p,q \in \bd X$ are $\G$-dual.  Then
\[d(p, A) + \Hangle(q, A) \le \pi.\]
\end{corollary}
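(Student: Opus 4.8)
The plan is to derive the inequality directly from the $\pi$-convergence theorem (Theorem 3.3 of \cite{gs}), applied at the points of $A$. Since $p,q$ are $\G$-dual, fix $\w \in \BG \setminus \G$ with $\w^+ = p$ and $\w^- = q$. Substituting these into $\pi$-convergence gives
\[
d(r, q) + d(T^{\w} r, p) \le \pi \qquad \text{for all } r \in \bd X.
\]
The whole argument will come from feeding the points of $A$ into $r$ and controlling where they go under $T^{\w}$.

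First I would establish the key preliminary fact that $T^{\w} A \subseteq A$; this is exactly where the hypotheses $A \in \CI$ (that is, $A$ closed and $\G$-invariant) enter. For each $a \in A$, the orbit map $\rho_a \colon \g \mapsto \g a$ sends $\G$ into $A$ by invariance, so its Stone--\VV{C}ech extension $\beta \rho_a \colon \BG \to \cl X$ takes values in the closure of the orbit $\G a$; since $\G a \subseteq A$ and $A$ is closed in the compact space $\cl X$, this closure lies in $A$. In particular $T^{\w} a = (\beta \rho_a)(\w) \in A$.

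With this in hand, the inequality is a one-line manipulation. Applying the displayed bound with $r = a$ for an arbitrary $a \in A$ gives $d(a, q) + d(T^{\w} a, p) \le \pi$. Because $T^{\w} a \in A$, we have $d(T^{\w} a, p) \ge d(p, A)$ by definition of the metric distance to $A$; substituting yields $d(a, q) + d(p, A) \le \pi$, i.e. $d(a, q) \le \pi - d(p, A)$ for \emph{every} $a \in A$. Taking the supremum over $a \in A$ turns the left-hand side into $\Hangle(q, A)$, so $\Hangle(q, A) + d(p, A) \le \pi$, as required.

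The main obstacle is the preliminary step $T^{\w} A \subseteq A$, since the operators $T^{\w}$ are in general discontinuous in their argument and one cannot naively push $T^{\w}$ through a limit; instead the inclusion must be read off from the construction of $T^{\w}$ as the extension of orbit maps, using that $A$ is already closed and $\G$-invariant. Once that is secured, the asymmetry between the infimum $d(p,A)$ and the supremum $\Hangle(q,A)$ emerges naturally from the structure of the inequality: the forward image $T^{\w} a$ only knows it lands \emph{somewhere} in $A$, which yields the infimum bound $d(p,A)$, while the source point $a$ ranges over \emph{all} of $A$, which is precisely what produces the supremum $\Hangle(q,A)$.
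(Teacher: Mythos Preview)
Your argument is correct and is precisely the intended derivation: the paper states this corollary without proof, and the natural way to extract it from $\pi$-convergence is exactly the one you give --- use closedness and $\G$-invariance of $A$ to get $T^{\w}A\subseteq A$, then feed $a\in A$ into the inequality and take the supremum. Your careful justification of $T^{\w}A\subseteq A$ via the Stone--\VV{C}ech extension of the orbit map is the right way to handle the discontinuity of $T^{\w}$.
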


\begin{corollary}					\label{pi-r}
Let $p \in \bd X$ and $A \in \CI$.  Then $d(p, A) \le \pi - \radius^{\bd X} (A)$.
\end{corollary}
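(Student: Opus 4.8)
The plan is to deduce this directly from \corref{pi-convergence for CI}, once we know that $p$ has a $\G$-dual partner. Indeed, suppose $q\in\bd X$ is $\G$-dual to $p$. Then \corref{pi-convergence for CI} gives $d(p,A)+\Hangle(q,A)\le\pi$, while $\Hangle(q,A)\ge\inf_{y\in\bd X}\Hangle(y,A)=\radius^{\bd X}(A)$ by the very definition of the circumradius, so $d(p,A)\le\pi-\radius^{\bd X}(A)$. Thus the entire content of the corollary is the assertion that $\D(p)\neq\emptyset$ for every $p\in\bd X$, i.e.\ that every boundary point arises as the attracting point $\w^+=T^{\w}(x)$ of some $\w\in\BG\setminus\G$ (one may then take $q:=\w^-\in\bd X$).

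To produce such an $\w$, I would exploit cocompactness. Fix $x\in X$. The orbit $\G x$ is quasi-dense, so there is $R>0$ with $d(\,\cdot\,,\G x)\le R$ on $X$; choosing points $x_n$ along the ray $[x,p)$ with $d(x,x_n)\to\infty$ and $\g_n\in\G$ with $d(\g_n x,x_n)\le R$, one checks that $\g_n x\to p$ in $\cl X$ (a bounded perturbation of the $x_n$ does not affect the limit, since the comparison angles at $x$ tend to $0$). Since $d(\g_n x,x)\to\infty$, no element of $\G$ occurs among the $\g_n$ infinitely often, so after passing to a subsequence the $\g_n$ are pairwise distinct while still $\g_n x\to p$. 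Let $\w\in\BG$ be an accumulation point of $(\g_n)$; because the elements of $\G$ are isolated points of $\BG$, a sequence of distinct elements of $\G$ cannot accumulate in $\G$, so $\w\in\BG\setminus\G$. Finally, the map $\nu\mapsto T^{\nu}x$ is continuous on $\BG$, so $T^{\w}x$ is an accumulation point of $(\g_n x)$, hence $\w^+=T^{\w}x=p$.

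I do not expect a genuine obstacle here: beyond \corref{pi-convergence for CI}, which already packages the real geometric input, the argument is bookkeeping inside $\BG$. The one step needing a little care is the passage $\g_n x\to p\ \Rightarrow\ \w^+=p$ for an accumulation point $\w$: this uses continuity of $\nu\mapsto T^{\nu}x$ (rather than of $z\mapsto T^{\w}z$, which may fail), together with the check that $\w\notin\G$ — which in turn rests on $d(\g_n x,x)\to\infty$ and on $\G$ being a discrete subset of $\BG$ — and on the standing hypothesis of proper discontinuity, which is what guarantees that $T^{\w}$ maps $\cl X$ into $\bd X$ for $\w\notin\G$, so that $\w^+$ and $\w^-$ are genuinely boundary points. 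With those settled, the reduction in the first paragraph completes the proof.
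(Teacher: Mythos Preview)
Your proposal is correct and is precisely the intended argument: the paper states \corref{pi-r} without proof, immediately after \corref{pi-convergence for CI}, and your derivation---produce a $\G$-dual $q\in\D(p)$ via cocompactness and an accumulation point in $\BG\setminus\G$, then combine $d(p,A)+\Hd(q,A)\le\pi$ with $\Hd(q,A)\ge\radius^{\bd X}(A)$---is exactly the computation the paper is packaging. Your care with the continuity of $\nu\mapsto T^{\nu}x$, the discreteness of $\G$ in $\BG$, and the role of proper discontinuity in ensuring $\w^+\in\bd X$ is appropriate; note also that the paper later uses $\D(p')\neq\emptyset$ without comment in the proof of \lemref{long string}, confirming this is treated as a standing fact.
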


\begin{lemma}						\label{dual antipodes}
Let $p, q \in \bd X$ be $\Gamma$-dual, and suppose $d(T^{\w} p, T^{\w} q) \ge \pi$ for some $\w \in \BG$.  Then
\[\Hd(T^{\w} p, A) = \Hd (p, A) = \pi - d(q, A) = \pi - d(T^{\w} q, A)\]
for every $A \in \CI$. \end{lemma}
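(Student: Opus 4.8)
The plan is to show that the four quantities in the displayed equality form a cycle of inequalities, each link being short. Throughout, $d=\angle$ is the angle metric on $\bd X$, which takes values in $[0,\pi]$; in particular the hypothesis $d(T^\w p,T^\w q)\ge\pi$ already forces $d(T^\w p,T^\w q)=\pi$, so $T^\w p$ and $T^\w q$ are antipodal. Fix $A\in\CI$. Two basic facts about $T^\w$ will be used: since $A$ is closed in the cone topology and $\G$-invariant, and $T^\w z$ is a cone-topology limit of points $\g z$ with $\g\in\G$, we have $T^\w A\s A$; and $T^\w$ is $1$-Lipschitz for $\angle$ by the Guralnik--Swenson observation. (Nothing below requires $\w\notin\G$, so the case $\w\in\G$ is subsumed.)

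Concretely, I would establish
\[\Hd(p,A)\ \le\ \pi-d(q,A)\ \le\ \pi-d(T^\w q,A)\ \le\ \Hd(T^\w p,A)\ \le\ \Hd(p,A),\]
so that equality holds throughout, which gives the claimed identities. For the first link, since $p,q$ are $\G$-dual, so are $q,p$ (replace the witnessing element $\nu\in\BG\setminus\G$ by $S\nu$, which again lies in $\BG\setminus\G$), and \corref{pi-convergence for CI} applied to the pair $(q,p)$ gives $d(q,A)+\Hd(p,A)\le\pi$. The second link is just $T^\w A\s A$ together with $1$-Lipschitzness: $d(T^\w q,A)=\inf_{a\in A}d(T^\w q,a)\le\inf_{a\in A}d(T^\w q,T^\w a)\le\inf_{a\in A}d(q,a)=d(q,A)$. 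The third link uses antipodality: for every $a\in A$ the triangle inequality gives $d(T^\w p,a)\ge\pi-d(a,T^\w q)$, and taking the supremum over $a\in A$ converts the infimum on the right into $d(T^\w q,A)$, yielding $\Hd(T^\w p,A)\ge\pi-d(T^\w q,A)$.

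The fourth link, $\Hd(T^\w p,A)\le\Hd(p,A)$, is the one place where care is needed, because $T^\w$ need not carry $A$ onto $A$, so $1$-Lipschitzness alone does not bound $\Hd(T^\w p,A)$ from above. I would argue pointwise. Fix $a\in A$ and choose a net $(\g_i)$ in $\G$ with $\g_i\to\w$ in $\BG$; then $\g_i p\to T^\w p$ in the cone topology, so lower semicontinuity of $\angle$ gives $\angle(T^\w p,a)\le\liminf_i\angle(\g_i p,a)$. Since each $\g_i$ acts isometrically on $(\bd X,\angle)$ and $\g_i^{-1}a\in A$, we have $\angle(\g_i p,a)=\angle(p,\g_i^{-1}a)\le\Hd(p,A)$ for every $i$, whence $\angle(T^\w p,a)\le\Hd(p,A)$; taking the supremum over $a\in A$ closes the cycle. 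The main obstacle is precisely this last step — seeing that it is lower semicontinuity of the angle metric, not continuity of $T^\w$, that controls $\Hd(T^\w p,A)$ from above; the other three links are immediate from the cited corollary and elementary metric geometry.
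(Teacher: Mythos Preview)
Your proof is correct and follows exactly the paper's approach: the paper's entire proof is the one-line chain
\[\Hd(T^{\w} p, A) \le \Hd (p, A) \le \pi - d(q, A) \le \pi - d(T^{\w} q, A) \le \Hd (T^{\w} p, A),\]
which is your cycle read from the fourth link onward, and your detailed justifications (symmetry of $\G$-duality via $S\nu$, $1$-Lipschitzness and $T^\w A\subseteq A$, antipodality plus triangle inequality, and the semicontinuity argument for $\Hd(T^\w p,A)\le\Hd(p,A)$) are precisely the reasons behind each link.
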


\begin{proof}
$\Hd(T^{\w} p, A) \le \Hd (p, A) \le \pi - d(q, A) \le \pi - d(T^{\w} q, A) \le \Hd (T^{\w} p, A)$. \end{proof}

\section{Centers Lemma}

\begin{lemma}						\label{long string}
Suppose $A \in \CI$ and $K$ is a folded sphere.  Then for any $p \in K$, there exist $p' \in \bd X$ and $n \in K$ such that
\[\Hd (n, A) \le \pi - d(p', A) \le \pi - d(p, A \cap K).\] \end{lemma}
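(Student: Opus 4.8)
The plan is to exploit the fact that $K$ is folded, so there is a $K$-folding $\w \in \BG$ with $T^{\w}(\bd X) = T^{\w} K$ and $T^{\w}\res{K}$ an isometry. First I would locate a point of $K$ that plays the role of an antipode. Since $T^{\w}\res K$ is an isometry of $K$ onto the sphere $T^{\w} K$, and $T^{\w}(\bd X) = T^{\w} K$, the attracting point $\w^+ = T^{\w}(x)$ lies in $T^{\w} K$; pull it back along the isometry to get a point $q \in K$ with $T^{\w} q = \w^+$. The key geometric input is that a sphere, being isometric to a unit Euclidean sphere, contains an antipode of each of its points at distance $\pi$; so choose $n \in K$ with $d(n, q) = \pi$ inside $K$ (hence in $\bd X$, since distances in the sphere agree with the angle metric there). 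Because $T^{\w}\res K$ is an isometry, $d(T^{\w} n, T^{\w} q) = \pi$ as well, i.e.\ $d(T^{\w} n, \w^+) = \pi$.

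Next I would use $\pi$-convergence. Set $p' = \w^-$, the repelling point of $\w$. For the chosen $n$, the $\pi$-convergence theorem (Theorem 3.3 of \cite{gs}, as restated) gives $d(n, \w^-) + d(T^{\w} n, \w^+) \le \pi$, and since $d(T^{\w} n, \w^+) = \pi$ we get $d(n, \w^-) = 0$, so in fact $n = \w^- = p'$. That degenerate-looking conclusion is actually what we want: now I apply $\pi$-convergence to an arbitrary $a \in A$, namely $d(a, \w^-) + d(T^{\w} a, \w^+) \le \pi$, which rearranges to $d(T^{\w} a, \w^+) \le \pi - d(a, p')$. Taking the supremum over $a \in A$ and using that $T^{\w} a$ ranges over a subset of $A$ only after we invoke $G$-invariance — more precisely, using that $T^{\w}$ is $1$-Lipschitz and $A \in \CI$ so $\Hd(\cdot, A)$ behaves well — I would obtain $\Hd(\w^+, A) \le \pi - d(p', A)$. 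But $\w^+ = T^{\w} q$ with $q \in K$; set this $q$ to be the "$n$" of the lemma? No — here is where I must be careful about which point is which, so let me restate the roles: take $n := T^{\w} q \in T^{\w} K$. Since $T^{\w} K$ need not lie in $K$, this does not immediately finish, so the cleaner route is to keep $n = \w^-$ as above and instead bound $\Hd(n, A) = \Hd(\w^-, A)$ using Lemma~\ref{dual antipodes} with the dual pair $(\w^+, \w^-)$ and the hypothesis $d(T^{\w}\w^+ ?, \dots)$ — which is circular.

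The right organization, which I would carry out, is: let $q \in K$ satisfy $T^{\w} q = \w^+$; let $p' = \w^-$; let $n \in K$ be a $K$-antipode of $p$ (the given point), i.e.\ $d_K(p, n) = \pi$, hence $d(T^{\w} p, T^{\w} n) = \pi$. By Lemma~\ref{dual antipodes} applied to the $\G$-dual pair $T^{\w} p, T^{\w} n$ — wait, duality is about $\w^\pm$, not arbitrary images — so instead I invoke $\pi$-convergence directly twice: $d(p, \w^-) + d(T^{\w} p, \w^+) \le \pi$ and likewise with $n$. The genuinely useful inequality is $\Hd(n, A) \le \pi - d(\w^+, A)$ obtained by running $\pi$-convergence over all $a\in A$ with the pair $(\w^+,\w^-)$ in the $n$-slot — this requires $d(T^{\w} n, \w^+) $ or rather $d(T^{\w} a,\dots)$ bookkeeping — and then $d(\w^+, A) = d(T^{\w} q, A) \ge$ something controlled by $d(q, A\cap K)$. \emph{The main obstacle} is exactly this last link: passing from $d(T^{\w} q, A)$ back to $d(p, A \cap K)$. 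Here I would use that $T^{\w}$ restricted to $K$ is an isometry onto $T^{\w} K = T^{\w}(\bd X) \supseteq A$ in the image, so $T^{\w}(A \cap K)$ controls distances, and that $T^{\w}$ is globally $1$-Lipschitz with $d(p, A\cap K) = d(T^{\w} p, T^{\w}(A\cap K))$ on $K$; combining the folding isometry on $K$ with $\pi$-convergence and $1$-Lipschitzness off $K$ should close the chain $\Hd(n,A) \le \pi - d(p',A) \le \pi - d(p, A\cap K)$, with $p'$ chosen as $T^{\w}$-image or preimage of the relevant point. I expect the delicate part to be choosing the three points $p', n, q$ consistently so that the folding isometry, the antipodal structure of $K$, and $\pi$-convergence all apply to the correct arguments simultaneously.
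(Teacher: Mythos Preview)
Your proposal does not converge to a proof; the central gap is your choice of $p'$ and $n$.  You repeatedly try to pin $p'$ to $\w^-$ (the repelling point of the folding $\w$), but $\w^-$ is a single fixed point determined by $\w$ and has no a priori relation to the \emph{given} point $p \in K$.  This is why your chain keeps breaking at the last link ``$\pi - d(p',A) \le \pi - d(p, A\cap K)$'': nothing ties $d(\w^-,A)$ to $d(p, A\cap K)$.  Likewise, choosing $n$ as a $K$-antipode of $p$ gives $n$ no $\pi$-convergence relationship to anything; Lemma~\ref{dual antipodes} applies only to $\Gamma$-dual pairs, not to arbitrary antipodal images, as you correctly noticed before abandoning that route.

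The paper's argument is shorter than any of your attempts and uses neither $\w^{\pm}$ nor antipodes on $K$.  Since $K$ is a folded sphere, there is some $\w$ with $T^{\w}(\bd X) = K$; so pick $p' \in \bd X$ with $T^{\w} p' = p$, then pick any $n' \in \D(p')$ (a $\Gamma$-dual of $p'$, which exists because the action is geometric), and set $n = T^{\w} n' \in K$.  The three inequalities then drop out immediately: $\Hd(n,A) \le \Hd(n',A)$ because $T^{\w}$ is $1$-Lipschitz and $A$ is $\Gamma$-invariant (so $\Hd(\gamma n', A)$ is constant in $\gamma$ and $\Hd$ is lower semicontinuous); $\Hd(n',A) \le \pi - d(p',A)$ is exactly Corollary~\ref{pi-convergence for CI}; and $d(p',A) \ge d(T^{\w} p', T^{\w} A) = d(p, T^{\w} A) \ge d(p, A\cap K)$ since $T^{\w} A \subseteq A \cap K$.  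The isometry of $T^{\w}$ on the pre-sphere is never used.  The moral: $p'$ should be a \emph{preimage} of $p$ under the fold, and $n$ the folded image of a \emph{dual} of that preimage---you gestured at ``$T^{\w}$-image or preimage of the relevant point'' in your last line but never committed to it.
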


\begin{proof}
Suppose $\w \in \BG$ folds onto $K$.  Let $p' \in \bd X$ such that $T^{\w} p' = p$.  Let $n' \in \D(p')$ and $n = T^{\w} n'$.  Then
\[\Hd (n, A) \le \Hd(n', A) \le \pi - d(p', A) \le \pi - d(p, T^{\w} A) \le \pi - d(p, A \cap K).\qedhere\] \end{proof}

\begin{definition}
For $p \in K$, write $\A_{K} (n)$ for the antipode of $p$ on $K$. \end{definition}

\begin{lemma} [Centers Lemma]				\label{centers lemma}
Let $A \in \CI$.  For any folded sphere $K \subset \bd X$, the set $K \cap \Centers^{\bd X} (A)$ is a nonempty subset of $\Centers^{K} (A \cap K)$.  Moreover,
\[\radius^{\bd X} (A) = \radius^{K} (A \cap K) = \pi - \max_{K} d(\x, A \cap K) = \pi - \sup_{\bd X} d(\x, A),
\vspace{-4pt}\]
and the supremum is realized. \end{lemma}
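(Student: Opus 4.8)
The plan is to deduce the lemma from two ingredients: a purely metric identity on the round sphere $K$, and the $\BG$-machinery already set up (Lemma \ref{long string}, Corollary \ref{pi-r}, Lemma \ref{dual antipodes}, and density of round spheres). First I would record the spherical identity: for every nonempty closed $B \subseteq K$,
\[\radius^K(B) = \pi - \max_K d(\x, B),\]
with both extrema attained (since $K$ is compact in the angle metric and $d(\x, B),\Hd(\x, B)$ are $1$-Lipschitz there). This drops out of the antipode map $\A_K$ on $K$: every $b \in K$ lies on a great semicircle joining any $c \in K$ to $\A_K(c)$, so $d(c,b) + d(b, \A_K(c)) = \pi$; hence $\Hd(c, B) = \pi - d(\A_K(c), B)$ for all $c$, and taking infima over $c$ (noting $c \mapsto \A_K(c)$ is a bijection of $K$) gives the identity, together with $\Centers^K(B) = \setp{c \in K}{\A_K(c)\text{ realizes }\max_K d(\x, B)}$. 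I would then note $A \cap K \neq \emptyset$: if $\w$ folds onto $K$ then $\emptyset \neq T^\w A \subseteq A \cap K$, so the identity applies with $B = A \cap K$.

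Write $M = \max_K d(\x, A \cap K)$ and $r = \radius^{\bd X}(A)$. Applying Lemma \ref{long string} at each $p \in K$ produces a point $p' \in \bd X$ with $d(p', A) \ge d(p, A\cap K)$ and a point $n \in K$ with $\Hd(n, A) \le \pi - d(p, A\cap K)$; taking the supremum, resp.\ infimum, over $p \in K$ gives $\sup_{\bd X} d(\x, A) \ge M$ and $\inf_{n \in K} \Hd(n, A) \le \pi - M$, the latter yielding $r \le \pi - M$. For the reverse inequality $M \ge \sup_{\bd X} d(\x, A)$ I would use density: $\G K$ is dense in $\bd X$ (Lemma \ref{density of round spheres}; $K$ is round), and $d(\x, A)$ is lower semicontinuous for the cone topology (Lemma \ref{Hausdorff semicontinuity}), so for $y \in \bd X$, picking $\g_i k_i \to y$ with $\g_i \in \G$, $k_i \in K$, lower semicontinuity makes $d(k_i, A) = d(\g_i k_i, A)$ (using $\G$-invariance of $A$) eventually exceed $d(y, A) - \epsilon$, whence $M \ge d(k_i, A \cap K) \ge d(k_i, A) > d(y, A) - \epsilon$. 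Thus $M = \sup_{\bd X} d(\x, A)$, and the spherical identity gives $\radius^K(A \cap K) = \pi - M$, while $r \le \pi - M$.

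It remains to prove $r \ge \pi - M$ and that the supremum $\sup_{\bd X} d(\x, A)$ is realized; this is the heart of the matter, and I would get it from Lemma \ref{dual antipodes}. Pick $c \in \Centers^{\bd X}(A)$, nonempty by Corollary \ref{invariance of centers}, so $\Hd(c, A) = r$. Producing a $\G$-dual point $q \in \D(c)$ and some $\w \in \BG$ with $d(T^\w c, T^\w q) \ge \pi$ — for which the folding machinery (sending $c$ and $q$ to antipodes of a round sphere $T^\w(\bd X)$) together with the structure of dual points should suffice — Lemma \ref{dual antipodes} gives $\pi - d(q, A) = \Hd(c, A) = r$. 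Hence $\sup_{\bd X} d(\x, A) \ge d(q, A) = \pi - r$, which combined with Corollary \ref{pi-r} ($\sup_{\bd X} d(\x, A) \le \pi - r$) shows $\sup_{\bd X} d(\x, A) = \pi - r$, attained at $q$. Together with the previous paragraph this gives the displayed chain $r = \pi - M = \radius^K(A \cap K) = \pi - \max_K d(\x, A\cap K) = \pi - \sup_{\bd X} d(\x, A)$, with the supremum realized.

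Finally, for the statements about centers: from $r \le \inf_{n \in K}\Hd(n, A) \le \pi - M = r$ the infimum $\inf_{n \in K}\Hd(n, A)$ equals $r$ and is attained (compactness of $K$, continuity of $\Hd(\x, A)$ in the angle metric) at some $n^* \in K$, which therefore lies in $\Centers^{\bd X}(A)$, so $K \cap \Centers^{\bd X}(A) \neq \emptyset$; and if $c \in K \cap \Centers^{\bd X}(A)$, then $\radius^K(A\cap K) \le \Hd(c, A\cap K) \le \Hd(c, A) = r = \radius^K(A\cap K)$, so $c \in \Centers^K(A\cap K)$. I expect the one genuine obstacle to be the crux step above — producing the $\BG$-element $\w$ realizing $d(T^\w c, T^\w q) \ge \pi$ for a circumcenter $c$ and a dual point $q$ so that Lemma \ref{dual antipodes} applies — since everything else is bookkeeping with the cited lemmas and the spherical identity.
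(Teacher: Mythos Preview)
Your bookkeeping is solid, but the ``crux step'' you flag is a genuine gap, not a technicality. To invoke \lemref{dual antipodes} you need a pair $c,q$ that are $\Gamma$-dual \emph{and} some $\w \in \BG$ with $d(T^{\w}c,T^{\w}q) \ge \pi$. Nothing in the folding machinery produces this: each $T^{\w}$ is $1$-Lipschitz on $(\bd X,\angle)$, so folding cannot increase $d(c,q)$, and $\Gamma$-duality alone does not force $d(c,q)=\pi$ (think of a product with a Euclidean factor, where dual pairs can sit at any angle). The paper never establishes that a circumcenter has a dual point at angle $\pi$, and I do not see how to extract it from the cited lemmas. Since your inequality $r \ge \pi - M$, the realization of the supremum, and the nonemptiness of $K \cap \Centers^{\bd X}(A)$ all rest on this step as you have organized things, the argument is incomplete.

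The paper closes the gap by a much simpler observation that you already have the ingredients for: $\Centers^{\bd X}(A)$ is closed and $\Gamma$-invariant (\corref{invariance of centers}), hence $T^{\w}\bigl(\Centers^{\bd X}(A)\bigr) \subseteq \Centers^{\bd X}(A)$ for every $\w \in \BG$. Taking $\w$ to fold onto $K$ immediately yields some $m \in K \cap \Centers^{\bd X}(A)$. Now your spherical identity finishes: with $o = \A_K(m)$,
\[
\pi - M \;\le\; \pi - d(o, A\cap K) \;=\; \Hd(m, A\cap K) \;\le\; \Hd(m,A) \;=\; r,
\]
which together with your $r \le \pi - M$ gives $r = \pi - M$. (Incidentally, once you pick $p \in K$ realizing $M$ and apply \lemref{long string}, the point $p'$ already realizes $\sup_{\bd X} d(\x,A) = M$, since $d(p',A) \ge M$ while your density argument gives $d(p',A) \le M$; so \lemref{dual antipodes} is not needed for that either.) With this one replacement your proof goes through, and in fact your density argument for $M = \sup_{\bd X} d(\x,A)$ is a pleasant alternative to the paper's squeeze.
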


\begin{proof}
Note $\radius^{K} (A \cap K) = \pi - \max_{K} d(\x, A \cap K)$ is trivial, while $\radius^{\bd X} (A) \le \pi - \sup_{\bd X} d(\x, A)$ because $\Hd (n, A) + d(p, A) \le \pi$ for all $n \in \D(p)$%
.  We will show $\pi - \sup_{\bd X} d(\x, A) \le \pi - \max_{K} d(\x, A \cap K) = \radius^{\bd X} (A)$.

Choose $p \in K$ to maximize $d(\x, A \cap K)$ on $K$.  Find $p' \in \bd X$ and $n \in K$ by \lemref{long string} such that
\[\Hd (n, A) \le \pi - d(p', A) \le \pi - d(p, A \cap K).\]
Let $q = \A_{K} (n)$; then $\Hd (n, A \cap K) + d(q, A \cap K) = \pi$.  By choice of $p \in K$, $d(q, A \cap K) \le d(p, A \cap K)$.  So $\Hd(n, A \cap K) = \pi - d(q, A \cap K) \ge \pi - d(p, A \cap K)$.  Thus we have
\[\pi - d(p, A \cap K) \le \Hd (n, A \cap K) \le \Hd (n, A) \le \pi - d(p', A) \le \pi - d(p, A \cap K),\]
and we obtain equality throughout:
\[\Hd (n, A \cap K) = \Hd (n, A) = \pi - d(p', A) = \pi - d(p, A \cap K).\]
Hence
\[\pi - \sup_{\bd X} d(\x, A) \le \pi - d(p', A) = \pi - d(p, A \cap K) = \pi - \max_{K} d(\x, A \cap K).\]

On the other hand, if $\w \in \BG$ folds onto $K$, then $T^{\w}(\Centers^{\bd X} (A)) \subseteq K \cap \Centers^{\bd X} (A)$; thus there is some $m \in K \cap \Centers^{\bd X} (A)$.  So
\[\Hd(m, A \cap K) \le \Hd (m, A) \le \Hd(n, A) = \pi -d(p, A \cap K)\]
by choice of $m$.  But for $o = \A_{K} (m)$, we know $\Hd(m, A \cap K) + d(o, A \cap K) = \pi$, so
\[\pi - d(p, A \cap K) \le \pi - d(o, A \cap K) = \Hd(m, A \cap K) \le \Hd(m, A) \le \pi - d(p, A \cap K).\]
Hence we have equality throughout; in particular, $\Hd(m, A) = \pi - d(p, A \cap K)$ gives us $\radius^{\bd X} (A) = \pi - \max_{K} d(\x, A \cap K)$, and we have all the equalities claimed by the lemma.  Thus $\Hd(m, A \cap K) = \radius^{K} (A \cap K)$ shows $K \cap \Centers^{\bd X} (A) \subseteq \Centers^{K} (A \cap K)$, and the proof is complete. \end{proof}

\begin{corollary}					\label{proper subspheres}
Let $A \in \CI$.  If $A \cap K$ lies in a proper subsphere of some folded sphere $K \subset \bd X$, then $\radius^{\bd X} (A) \le \frac{\pi}{2}$.
\end{corollary}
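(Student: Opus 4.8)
The plan is to read this off from \lemref{centers lemma} together with one elementary fact about spheres. By the Centers Lemma, $\radius^{\bd X}(A) = \radius^{K}(A \cap K)$, so it suffices to prove $\radius^{K}(A \cap K) \le \frac{\pi}{2}$ under the hypothesis that $A \cap K$ is contained in a proper subsphere $K' \subsetneq K$.

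The geometric input I would isolate is the following: if $K$ is a sphere and $K' \subsetneq K$ is a proper subsphere, then some $p \in K$ satisfies $\Hd(p, K') = \frac{\pi}{2}$. Indeed, realize $K$ as the unit sphere of a Euclidean space $V$; a proper subsphere $K'$ is then the unit sphere of a proper linear subspace $W \subsetneq V$, so $W^{\perp} \ne \{0\}$, and any unit vector $p \in W^{\perp}$ has $d(p, x) = \frac{\pi}{2}$ for every $x \in K'$ since $p \perp x$. Hence $\Hd(p, K') = \frac{\pi}{2}$.

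Granting this, from $A \cap K \subseteq K'$ I obtain $\Hd(p, A \cap K) \le \Hd(p, K') = \frac{\pi}{2}$, and therefore $\radius^{K}(A \cap K) \le \Hd(p, A \cap K) \le \frac{\pi}{2}$. Combining with the Centers Lemma, $\radius^{\bd X}(A) = \radius^{K}(A \cap K) \le \frac{\pi}{2}$, which is the claim.

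The corollary is short, so I do not expect a substantial obstacle; the only point deserving a word is the assertion that a proper subsphere of an $n$-sphere is ``great'', i.e.\ cut out by a linear subspace of the ambient Euclidean space, so that the orthogonal-complement argument applies. This holds because an isometrically embedded unit $k$-sphere inside the unit $n$-sphere is $\pi$-convex, and, non-antipodal pairs having unique geodesics in $S^{n}$, it is swept out by great-circle arcs through any of its points, hence totally geodesic; in any event this is presumably the intended meaning of ``subsphere'' here.
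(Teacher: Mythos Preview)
Your argument is correct and is exactly the intended one: the paper states this corollary without proof immediately after the Centers Lemma, and your deduction---$\radius^{\bd X}(A)=\radius^{K}(A\cap K)$ from \lemref{centers lemma}, together with the elementary observation that a proper subsphere $K'\subsetneq K$ has a pole $p\in K$ at distance $\frac{\pi}{2}$ from every point of $K'$---is precisely what is meant. Your closing remark that an isometrically embedded unit sphere in $K$ is necessarily a great subsphere is the right justification for the linear-algebra picture.
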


\section{$\pi$-convex Subsets of Spheres}

\begin{lemma}						\label{no large proper convex sets}
No proper, $\pi$-convex subset of a sphere $K$ has circumradius $> \frac{\pi}{2}$. \end{lemma}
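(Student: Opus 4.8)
The plan is to work in the sphere $K$ itself and use the standard fact that a closed convex subset of a CAT($1$) space of radius $< \pi$ has a unique circumcenter fixed by every isometry of that subset — equivalently, that the circumcenter of a set lies in its closed convex hull when the set has small radius. Let $B \subsetneq K$ be a proper $\pi$-convex subset; we may assume $B$ is nonempty and, replacing $B$ by its closure (which is still $\pi$-convex and still proper, by lower semicontinuity together with the fact that a dense subset of $K$ cannot be $\pi$-convex in $K$ unless it is all of $K$), that $B$ is closed, hence compact. Suppose for contradiction that $\radius^K(B) > \frac{\pi}{2}$.

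First I would observe that, since $K$ is a unit sphere, for each point $x \in K$ we have $\Hd(x, B) \ge \radius^K(B) > \frac{\pi}{2}$, so in particular $\Hd(x, B) > \frac{\pi}{2}$ for \emph{every} $x \in K$; applying this to the antipode $\A_K(x)$ of $x$ and using $d(x, b) + d(\A_K(x), b) = \pi$ for all $b \in K$, we get $d(x, B) = \pi - \Hd(\A_K(x), B) < \frac{\pi}{2}$ for every $x \in K$. Thus $B$ is $\frac{\pi}{2}$-dense in $K$. Now take any $x \notin B$ (possible since $B$ is proper) and consider the nearest point: since $B$ is closed and $\pi$-convex in the CAT($1$) space $K$ and $d(x, B) < \frac{\pi}{2} < \pi$, there is a unique nearest point $\pi_B(x) \in B$, and the geodesic from $x$ to $\pi_B(x)$ meets $B$ orthogonally. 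I would then look at the antipode $\A_K(x)$: the geodesic through $x$ and $\pi_B(x)$ extends past $\pi_B(x)$ to a geodesic of length $\pi$ ending at $\A_K(x)$, and by convexity of $B$ together with the orthogonality, the point on this geodesic at distance $d(x,B)$ on the \emph{other} side of $\pi_B(x)$ — call it $x'$ — satisfies $d(x', B) = d(x, B)$ with $\pi_B(x') = \pi_B(x)$ as well; more importantly, comparing with the round sphere, $d(\A_K(x), b) \le \pi - d(x,b)$ forces the whole of $B$ into the closed ball of radius $\pi - d(x, \pi_B(x))$ about $\A_K(x)$ only after a convexity argument, so instead I would argue directly that $\Hd(\A_K(x), B) \le \frac{\pi}{2}$, contradicting the displayed inequality $\Hd(\cdot, B) > \frac{\pi}{2}$.

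Concretely, the cleanest route: the complement $K \setminus B$ is open; pick $x$ in it, let $r = d(x,B) \in (0, \frac{\pi}{2})$ and $p = \pi_B(x) \in B$. Every $b \in B$ lies in the half-space $\{y : \angle_p(x, y) \ge \frac{\pi}{2}\}$ by the orthogonality of nearest-point projection in CAT($1$); spherical trigonometry in the comparison triangle $\triangle(x, p, b)$ with $\angle_p(x,b) \ge \frac{\pi}{2}$ and $d(x,p) = r$ then gives $\cos d(x, b) \le \cos r \cos d(p,b) \le \cos r$, so $d(x, b) \ge r$ — consistent — and, applied to the antipode, $\cos d(\A_K(x), b) = -\cos d(x,b) \ge -\cos r \cos d(p,b)$, i.e. $d(\A_K(x), b) \le \pi - \text{(something)}$; carrying the trigonometry through yields $d(\A_K(x), b) \le \frac{\pi}{2}$ precisely because $r < \frac{\pi}{2}$ makes $\cos r > 0$ while $\cos d(p,b)$ ranges in $[-1,1]$. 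Hence $\Hd(\A_K(x), B) \le \frac{\pi}{2} < \radius^K(B) \le \Hd(\A_K(x), B)$, a contradiction.

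The main obstacle is the clean handling of the nearest-point projection and the orthogonality (Busemann-type convexity) in the CAT($1$) sphere $K$ when $d(x,B) < \frac{\pi}{2}$: one must be careful that $\pi$-convexity of $B$ (rather than $\pi$-convexity of $K$, which always holds for a sphere when distances are $< \pi$) is exactly what is needed for the projection to be well-defined and for the comparison angle inequality $\angle_p(x, b) \ge \frac{\pi}{2}$ to hold for all $b \in B$. Once that orthogonality is in hand, the rest is elementary spherical trigonometry in $\mathbb{S}^2$ comparison triangles. An alternative, slicker finish — avoiding trigonometry — is to note that a point at distance $r < \frac{\pi}{2}$ from a $\pi$-convex set has an "opposite" point (its antipode) whose Hausdorff distance to the set is at most $\frac{\pi}{2}$, which one can also extract from the known characterization that circumradius-$> \frac{\pi}{2}$ closed convex sets in CAT($1$) spaces contain a pair of antipodes; a sphere $K$ contains such a pair only if the convex set is all of $K$, immediately giving the contradiction.
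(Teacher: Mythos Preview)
Your main trigonometric step is wrong. You claim that the orthogonality $\angle_p(x,b)\ge\frac{\pi}{2}$ together with $r=d(x,p)<\frac{\pi}{2}$ forces $d(\A_K(x),b)\le\frac{\pi}{2}$ for every $b\in B$; but take $b=p=\pi_B(x)$ itself: then $d(\A_K(x),p)=\pi-r>\frac{\pi}{2}$. The inequality you actually derive, $\cos d(\A_K(x),b)\ge -\cos r\,\cos d(p,b)$, has a right-hand side that is \emph{negative} whenever $d(p,b)<\frac{\pi}{2}$, so it cannot yield $\cos d(\A_K(x),b)\ge 0$. Orthogonality at the foot of the projection bounds $d(x,b)$ from below (giving $d(x,b)\ge r$, which you already knew), not $d(\A_K(x),b)$ from above as you need. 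So $\A_K(x)$ is not a candidate center, and no contradiction arises.

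Your alternative finish also breaks. Even granting the characterization you cite, it is simply not true that a $\pi$-convex subset of a sphere containing a pair of antipodes must equal all of $K$: any great subsphere, or even the two-point set $\{p,-p\}$, is a proper $\pi$-convex subset of $K$ (for $\dim K\ge 1$) containing antipodes.

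For comparison, the paper argues by induction on $\dim K$. With $p$ antipodal to a circumcenter of $A$ (so $p$ maximizes $d(\cdot,A)$ on $K$, with $d(p,A)=\pi-r<\frac{\pi}{2}$), one radially projects $A$ from $p$ onto the equatorial sphere $K'=\{p\}^\perp$, checks that the image is nonempty, proper, and $\pi$-convex in $K'$, applies the inductive hypothesis to find a direction $q\in K'$ making angle $\ge\frac{\pi}{2}$ at $p$ with every projected point, and then observes that moving from $p$ slightly toward $q$ strictly increases $d(\cdot,A)$, contradicting the choice of $p$. The key idea you are missing is this dimension-reduction: one does not exhibit an explicit center, but instead rules out the maximizer of $d(\cdot,A)$.
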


\begin{proof}
We proceed by induction on $\dim K$.
The case $\dim K = 0$ is clear, so assume $\dim K \ge 1$ and that the lemma holds for all proper subspheres of $K$.
Let $A$ be a $\pi$-convex subset of $K$, and let $r = \radius (A)$.
Suppose, by way of contradiction, that $r > \frac{\pi}{2}$ but $A \neq K$.
Let $p \in K$ be antipodal to a center of $A$, so $p$ maximizes $d(\x, A)$ on $K$ with $d(p, A) = \pi - r$.
Let $K' = \set{p}^\perp \subset K$, and let $B$ be the set of $q \in K'$ such that the half-open geodesic arc $[p,q)$ contains some point of $A$.
Clearly $B$ is $\pi$-convex by $\pi$-convexity of $A$; $B$ is also proper since $p \notin A$ and nonempty because $r > \frac{\pi}{2}$.
So by the induction hypothesis, $B$ has radius $\le \frac{\pi}{2}$ in $K'$.
Let $q \in B$ be antipodal to a center of $B$.
Then $\angle_p (q, b) \ge \frac{\pi}{2}$ for all $b \in B$.
It follows that a small movement from $p$ in the direction of $q$ will increase $d(\x, A)$, contradicting our choice of $p$.
Thus $A$ cannot have radius $> \frac{\pi}{2}$ without $A = K$.
\end{proof}

\begin{corollary}					\label{pi-convex}
Let $\G$ be a group acting geometrically on a proper \textnormal{CAT($0$)} space $X$.
If $A \in \CI$ $\bd X$ is $\pi$-convex,
then $\bd X$ contains a nonempty, invariant subset of circumradius $\le \frac{\pi}{2}$. \end{corollary}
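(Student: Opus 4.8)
The plan is to intersect $A$ with a folded round sphere and then invoke the Centers Lemma. By Theorem~A of~\cite{gs}, fix a folded round sphere $K \subset \bd X$; it suffices to show $\radius^{\bd X}(A) \le \frac{\pi}{2}$, since then $A$ itself (which is nonempty, closed, and $\G$-invariant) is the desired subset.

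First I would observe that $A \cap K$ is a \emph{proper} subset of $K$. Indeed, if $K \subseteq A$, then $\G$-invariance of $A$ gives $\G K \subseteq A$; since $A$ is closed and $\G K$ is dense in $\bd X$ by \lemref{density of round spheres}, this forces $A = \bd X$, contradicting $A \in \CI$. Next I would check that $A \cap K$ is $\pi$-convex as a subset of $K$. Since $K$ carries the metric induced from $\bd X$, for non-antipodal $p, q \in K$ the short arc of $K$ from $p$ to $q$ has length equal to $d(p,q)$ and is therefore the unique Tits geodesic between them; so if $p, q \in A \cap K$, this arc lies in $A$ by $\pi$-convexity of $A$ and in $K$ by construction, whence it lies in $A \cap K$.

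Now $A \cap K$ is a proper, $\pi$-convex subset of $K$, so \lemref{no large proper convex sets} gives $\radius^{K}(A \cap K) \le \frac{\pi}{2}$. By the Centers Lemma (\lemref{centers lemma}), $A \cap K$ is nonempty and $\radius^{\bd X}(A) = \radius^{K}(A \cap K) \le \frac{\pi}{2}$, which completes the argument. I expect the only real content to lie in the first two steps — establishing that $A \cap K$ is a \emph{proper} $\pi$-convex subset of $K$, by combining the fact that round spheres sit convexly in $\bd X$ with the density of $\G K$ — after which \lemref{no large proper convex sets} and the Centers Lemma yield the conclusion at once.
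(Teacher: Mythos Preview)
Your proposal is correct and follows essentially the same approach as the paper: intersect $A$ with a folded round sphere $K$, argue that $A \cap K$ is a proper $\pi$-convex subset of $K$ (using density of $\G K$ for properness), invoke \lemref{no large proper convex sets} to bound $\radius^{K}(A \cap K) \le \frac{\pi}{2}$, and then use the Centers Lemma to identify this with $\radius^{\bd X}(A)$. The only differences are cosmetic: the paper applies the Centers Lemma first and phrases the properness step as ``$r < \pi$'', while you establish properness directly before invoking \lemref{no large proper convex sets}, and you spell out why $A \cap K$ is $\pi$-convex where the paper just says ``clearly''.
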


\begin{proof}
Let $A \in \CI$ be $\pi$-convex and let $r = \radius^{\bd X} (A)$.
Let $K$ be a folded round sphere in $\bd X$; clearly $A \cap K$ is $\pi$-convex.
By \lemref{centers lemma}, $\radius^{K} (A \cap K) = r$.  Also, $r < \pi$:  Otherwise, $A \cap K = K$ implies $A = \bd X$ because $\G K$ is dense in $\bd X$.
But no sphere has a proper, $\pi$-convex subset of circumradius $> \frac{\pi}{2}$, so $r = \radius^{K} (A \cap K)$ must be $\le \frac{\pi}{2}$.
\end{proof}

\begin{lemma}						\label{decompositions on spheres}
Let $A * B$ be a complete \textnormal{CAT($1$)} space containing a round sphere $K \subseteq A * B$, and let $S_A = A \cap K$ and $S_B = B \cap K$.  Then $(S_A)^\perp = B$ and $(S_B)^\perp = A$, and $K = S_A * S_B$. \end{lemma}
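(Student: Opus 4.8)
My approach is to reduce the statement to linear algebra on the round sphere $K$. Write $Z := A*B$ and let $\theta := d(\cdot,A)\colon Z\to[0,\frac{\pi}{2}]$ be the join parameter, so that $\theta + d(\cdot,B) \equiv \frac{\pi}{2}$, the (closed) factors are $A = \theta^{-1}(0)$ and $B = \theta^{-1}(\frac{\pi}{2})$, and $S_A = \theta^{-1}(0)\cap K$, $S_B = \theta^{-1}(\frac{\pi}{2})\cap K$. Realize $K$ as the unit sphere of a Euclidean space $E$ of dimension $\dim Z + 1$, and for $T\subseteq K$ let $\langle T\rangle\le E$ be its linear span, so that the great subspheres of $K$ are exactly the sets $K\cap V$ with $V\le E$. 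The plan is to show that $S_A$ and $S_B$ are great subspheres, that their spans $V_A := \langle S_A\rangle$ and $V_B := \langle S_B\rangle$ are orthogonal, and that $V_A\oplus V_B = E$: then $K = (K\cap V_A)*(K\cap V_B) = S_A*S_B$ immediately, a dimension count forces $\dim S_A = \dim A$ and $\dim S_B = \dim B$, and the perpendicularity statements follow. Throughout I would use the explicit formula for the spherical join metric, which writes $\cos d_Z(p,q)$ in terms of $\theta(p),\theta(q)$ and the distances between the $A$- and $B$-projections of $p$ and $q$, together with the standard fact that $\theta = d(\cdot,A)$ is convex along any geodesic of $Z$ that stays in $\{\theta < \frac{\pi}{2}\}$ — hence, restricting to great circles, is constant on any positive-dimensional round sphere contained in that region.

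First I would record two structural facts. Since $K$ is isometrically embedded in the CAT($1$) space $Z$, a shortest path in $K$ between two of its points realizes their $Z$-distance and so, by uniqueness of short geodesics, is the $Z$-geodesic; hence $K$ is $\pi$-convex in $Z$, and therefore $S_A = A\cap K$ and $S_B = B\cap K$ are closed and $\pi$-convex in $K$. Next, let $\sigma$ be the antipodal involution of $K$; for $z\in K$ one has $d_Z(z,\sigma z) = d_K(z,\sigma z) = \pi$, and feeding this into the join distance formula forces $\theta(z) = \theta(\sigma z)$ (the right-hand side is always $\ge -\cos(\theta(z)-\theta(\sigma z))\ge -1$, with the first inequality strict unless $\theta(z) = \theta(\sigma z)$). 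So $\theta\circ\sigma = \theta$, and $S_A,S_B$ are $\sigma$-invariant, i.e.\ antipodally symmetric in $K$. Finally, every point of $A$ lies at distance $\frac{\pi}{2}$ from every point of $B$, so $V_A\perp V_B$; hence $V_A\cap V_B = \{0\}$ and $(K\cap V_A)*(K\cap V_B) = K\cap(V_A\oplus V_B)\subseteq K$.

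The heart of the proof — and the step I expect to be the real work — is the equality $V_A\oplus V_B = E$. (That $S_A,S_B\neq\emptyset$ is a routine preliminary: if $B\cap K = \emptyset$ then $\theta < \frac{\pi}{2}$ on all of $K$, so $\theta$ is convex on $K$ and hence constant, $\theta\equiv c$; but $c = 0$ forces $K\subseteq A$, impossible since $\dim A < \dim K$, while $c > 0$ makes $z\mapsto(\text{nearest point of }A\text{ to }z,\ \text{nearest point of }B\text{ to }z)$ a continuous injection $K\hookrightarrow A\times B$, contradicting $\dim K = \dim A + \dim B + 1$; the case $A\cap K = \emptyset$ is symmetric.) Suppose $V_A\oplus V_B\neq E$ and pick a unit vector $q\in(V_A\oplus V_B)^\perp$ and a point $s\in S_A$. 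The great circle through $q$ and $s$ lies in $K$, and since $q$ and $s$ are both orthogonal to $V_B$ the plane they span meets $V_B$ only in $0$, so this circle misses $B\cap K$ (which lies in $V_B$) and hence misses $B$; therefore $\theta < \frac{\pi}{2}$ along it. Consequently $\theta$ is convex along the semicircle from $s$ through $q$ to $-s$, and since $\theta(s) = \theta(-s) = 0$ (using that $S_A$ is antipodally symmetric) and $\theta\ge 0$, we get $\theta\equiv 0$ there; thus $q\in A\cap K = S_A\subseteq V_A$, contradicting $q\in(V_A\oplus V_B)^\perp\setminus\{0\}$. Hence $E = V_A\oplus V_B$, so $K = (K\cap V_A)*(K\cap V_B)$. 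To finish I identify $S_A$ with $K\cap V_A$: on the sphere $K\cap V_A$ one has $(K\cap V_A)\cap B = S_B\cap V_A\subseteq V_B\cap V_A = \emptyset$, so $\theta < \frac{\pi}{2}$ there and $\theta$ is constant on $K\cap V_A$; as $\theta$ vanishes on the spanning set $S_A$ it vanishes on all of $K\cap V_A$, whence $K\cap V_A\subseteq A\cap K = S_A$. So $S_A = K\cap V_A$, symmetrically $S_B = K\cap V_B$, and $K = S_A*S_B$; then $\dim S_A + \dim S_B + 1 = \dim K = \dim A + \dim B + 1$ with $\dim S_A\le\dim A$, $\dim S_B\le\dim B$ gives $\dim S_A = \dim A$ and $\dim S_B = \dim B$, so $S_A$ and $S_B$ are round spheres in $A$ and $B$.

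It remains to treat the perpendiculars. Since $S_A\subseteq A$ we have $(S_A)^\perp\supseteq A^\perp = B$. Conversely, suppose $d_Z(z,s) = \frac{\pi}{2}$ for every $s\in S_A$ but $z\notin B$, i.e.\ $\theta(z) < \frac{\pi}{2}$; let $a_0\in A$ realize $d_Z(z,a_0) = \theta(z)$. Inserting $d_Z(z,s) = \frac{\pi}{2}$ into the join distance formula and using $\cos\theta(z) > 0$ yields $d_A(a_0,s) = \frac{\pi}{2}$ for all $s\in S_A$; taking $s = a_0$ rules out $a_0\in S_A$. So $a_0$ is a point of $A$ lying at distance $\frac{\pi}{2}$ from every point of the round sphere $S_A$ of $A$; but then the geodesic cone $\{a_0\}*S_A$ embeds isometrically in $A$, giving an isometrically embedded sphere of dimension $\dim S_A + 1 = \dim A + 1$ — which is impossible. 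Hence $(S_A)^\perp = B$, and the symmetric argument gives $(S_B)^\perp = A$.
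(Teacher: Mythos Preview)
Your central tool — the claim that the join parameter $\theta = d(\cdot,A)$ is convex along any geodesic of $Z=A*B$ that stays in $\{\theta<\tfrac{\pi}{2}\}$ — is false in CAT($1$), and the argument collapses without it. Take $Z=S^2$ with $A=S^1$ the equator and $B=S^0$ the two poles, and let $C\subset S^2$ be a great circle tilted at angle $\alpha\in(0,\tfrac{\pi}{2})$ from the equator. Then $C$ avoids $B$, so $\theta<\tfrac{\pi}{2}$ on all of $C$; yet along the semicircle of $C$ between its two equatorial crossings one has $\theta(0)=\theta(\pi)=0$ while $\theta(\tfrac{\pi}{2})=\alpha>0$, which violates convexity. (Convexity of distance to a convex set is a CAT($0$) phenomenon; in CAT($1$) one only gets comparison inequalities of the form $(\cos d)''+\cos d\geq 0$, which do not give what you want.) This breaks your proof that $V_A\oplus V_B=E$: on the semicircle from $s$ through $q$ to $-s$ you cannot conclude $\theta\equiv 0$. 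It also breaks your nonemptiness argument for $S_A$ and $S_B$ and your identification $K\cap V_A=S_A$, since both rest on the same ``$\theta$ constant on a sphere contained in $\{\theta<\tfrac{\pi}{2}\}$'' principle.

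There is also a gap at the end: from $d_A(a_0,s)=\tfrac{\pi}{2}$ for all $s\in S_A$ you assert that the cone $\{a_0\}*S_A$ embeds isometrically in $A$ (and then call it a sphere — it would be a hemisphere). CAT($1$) comparison only gives upper bounds on the distances in that cone, not equality; the isometric embedding is not automatic. The paper's route is much shorter and avoids all of this: it first proves $(S_A)^\perp=B$ directly, using Balser--Lytchak's lemma that any point of a finite-dimensional CAT($1$) space has an antipode (distance $\geq\pi$) in every top-dimensional round sphere. Given $q\notin B$, pick $p\in A$ with $d(p,q)<\tfrac{\pi}{2}$ and then $n\in S_A$ with $d(p,n)\geq\pi$; the triangle inequality gives $d(n,q)>\tfrac{\pi}{2}$, so $q\notin(S_A)^\perp$. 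Once $(S_A)^\perp=B$ is known, $S_B=(S_A)^\perp\cap K$ is visibly a great subsphere of $K$ (it is cut out by linear equations $\langle\cdot,s\rangle=0$), and by symmetry so is $S_A$; the join decomposition $K=S_A*S_B$ follows immediately. The same Balser--Lytchak lemma would also close the hole in your final paragraph: it directly contradicts the existence of $a_0\in A\setminus S_A$ at distance $\tfrac{\pi}{2}$ from all of $S_A$.
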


\begin{proof}
We first show $(S_A)^\perp = B$.  Clearly $B \subseteq (S_A)^\perp$.  So suppose $q \in A * B \setminus B$; then there is some $p \in A$ such that $d(p, q) < \frac{\pi}{2}$.  By Balser and Lytchak \cite[Lemma 3.1]{bl-centers}, there is some $n \in S_A$ such that $d(p, n) \ge \pi$.  Thus $d(n, q) \ge d(n, p) - d(p, q) > \frac{\pi}{2}$, hence $q \notin (S_A)^\perp$ because $n \in S_A$.  Therefore, $(S_A)^\perp = B$.

Thus $S_B = (S_A)^\perp \cap K$, which is a subsphere of $K$ by linear algebra.  By symmetry, $(S_B)^\perp = A$ and $S_A = (S_B)^\perp \cap K$.  Hence $K = S_A * S_B$. \end{proof}

\section{Conclusion}

Before completing the proof of \thmref{equivalences} from the introduction, we point out for comparison the following known result, which details the case of suspensions:

\begin{proposition}					\label{suspension equivalences}
Let $\G$ be a group acting geometrically on a proper \textnormal{CAT($0$)} space $X$ with $\abs{\bd X} \ge 3$.  The following are equivalent.
\begin{enumerate}
\item
\label{s1}
Some finite index subgroup $\G_0$ of $\G$ fixes a point of $\bd X$.
\item
\label{s2}
For some finite index subgroup $\G_0$ of $\G$, $\bd X$ contains a nonempty, $\G_0$-invariant subset of circumradius $< \frac{\pi}{2}$.
\item
\label{s3}
$\bd X$ splits as a suspension.
\item
\label{s4}
A quasi-dense, closed convex subset $X' \s X$ splits as a product with $\R$.
\end{enumerate}
Moreover, if $X$ is minimal then each of the above is equivalent to
\begin{enumerate}
\setcounter{enumi}{4}
\item
\label{s5}
$X$ splits as a nontrivial product with $\R$.
\end{enumerate}
\end{proposition}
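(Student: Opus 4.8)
The plan is to prove the chain of equivalences by verifying \itemref{s1} $\Leftrightarrow$ \itemref{s2}, the cycle \itemref{s1} $\Rightarrow$ \itemref{s3} $\Rightarrow$ \itemref{s4} $\Rightarrow$ \itemref{s1}, and finally \itemref{s4} $\Leftrightarrow$ \itemref{s5} under the extra hypothesis of minimality. Almost all of this assembles results already available; the one step with genuine content is \itemref{s4} $\Rightarrow$ \itemref{s1}.

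For \itemref{s1} $\Rightarrow$ \itemref{s2} there is nothing to do, since a $\G_0$-fixed point is a $\G_0$-invariant set of circumradius $0 < \frac{\pi}{2}$. For \itemref{s2} $\Rightarrow$ \itemref{s1}, given a nonempty $\G_0$-invariant $A \subset \bd X$ with $\radius^{\bd X}(A) < \frac{\pi}{2}$, I would replace $A$ by $\cl A$ (legitimate by lower semicontinuity of $\angle$) and invoke \corref{invariance of centers} to see that $\Centers^{\bd X}(\cl A)$ is a nonempty, closed, $\G_0$-invariant set; being a set of circumradius $< \frac{\pi}{2}$ in the complete $\textnormal{CAT}(1)$ space $\bd X$, it is a single point, which $\G_0$ fixes. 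For \itemref{s1} $\Rightarrow$ \itemref{s3}, a finite-index $\G_0 \le \G$ still acts geometrically on $X$, so this is precisely the Papasoglu--Swenson theorem \cite{ps} (building on \cite{ruane01,swenson99}): a fixed point in $\bd X$ makes $\bd X$ a suspension. And \itemref{s3} $\Rightarrow$ \itemref{s4} is the fact recalled in the introduction --- Bridson \cite{bridson} gives $X = Y \times \R$ when $X$ is geodesically complete, and in general one first passes to a quasi-dense closed convex subspace via \propref{caprace minimality}.

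The substantive step is \itemref{s4} $\Rightarrow$ \itemref{s1}. Since a quasi-dense subspace has the same boundary, \itemref{s4} already says $\bd X = \bd Y * S^0$ is a suspension, so I may instead work with the canonical $\G$-invariant quasi-dense minimal closed convex subspace $X'$ furnished by \propref{caprace minimality}; note that $X'$ is also minimal for $\Isom(X')$, since an $\Isom(X')$-invariant proper closed convex subset of $X'$ would in particular be $\G$-invariant. Write $X' = \R^m \times Z$ for the de~Rham decomposition, with $Z$ having trivial Euclidean factor. Then $m \ge 1$: if $m = 0$ then $X' = Z$ would be $\Isom(X')$-minimal with a suspension boundary yet no Euclidean factor, contradicting the implication \itemref{s3} $\Rightarrow$ \itemref{s4} applied to $X'$ (the resulting quasi-dense splitting subspace can be taken $\Isom(X')$-invariant, hence equal to $X'$, which would then carry an $\R$ factor). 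The Euclidean factor $\R^m$ is canonical, hence $\G$-invariant, and $\G$ acts cocompactly on it; so by the Flat Torus Theorem a finite-index subgroup $\G_0 \le \G$ acts on $\R^m$ purely by translations and therefore fixes every point of $\bd \R^m \subseteq \bd X' = \bd X$, giving \itemref{s1}.

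For the ``moreover'' clause: when $X$ is minimal --- in particular when $X$ is geodesically complete, by \propref{caprace minimality} --- the $\G$-invariant quasi-dense minimal subspace of \propref{caprace minimality} equals $X$, so the splitting produced in \itemref{s4} is already a splitting of $X$ itself; the reverse implication \itemref{s5} $\Rightarrow$ \itemref{s4} is trivial. I expect the main obstacle to be the bookkeeping in \itemref{s4} $\Rightarrow$ \itemref{s1}: arranging that one quasi-dense closed convex subspace is simultaneously $\G$-invariant, minimal for its own isometry group, and carries a canonical Euclidean factor, so that finite-index subgroups, the de~Rham decomposition, and the Flat Torus Theorem can all be applied together --- and checking that the degenerate possibilities that $\abs{\bd X} \ge 3$ is meant to exclude genuinely cannot occur.
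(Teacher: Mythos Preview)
Your argument is correct and follows the paper's outline for most implications, but you close the cycle differently. The paper never proves \itemref{s4}$\Rightarrow$\itemref{s1} directly; instead it obtains \itemref{s3}$\Rightarrow$\itemref{s2} from Foertsch--Lytchak \cite{fl} (by coning $\bd X$, the suspension becomes a product with $\R$, and uniqueness of the de~Rham decomposition forces a finite-index subgroup to preserve the $\R$-factor, hence the two poles), and then \itemref{s2}$\Rightarrow$\itemref{s1} and \itemref{s3}$\Leftrightarrow$\itemref{s4} are handled by citation. Your route via the Euclidean de~Rham factor of a minimal $X'$ is a legitimate alternative and arguably more geometric; it trades the Foertsch--Lytchak join-decomposition theorem for the structure of cocompact actions on $\R^m$. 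One caveat: the step ``by the Flat Torus Theorem a finite-index subgroup $\Gamma_0$ acts on $\R^m$ by translations'' is not really the Flat Torus Theorem. What you need is that the projection of $\Gamma$ to $\Isom(\R^m)$ has finite rotational part---essentially Bieberbach's theorem, once one knows this projection is (virtually) crystallographic. That fact is true for geometric actions with canonical Euclidean factor, but it requires its own justification (see e.g.\ \cite{cm-st} or the discussion of Clifford translations in \cite{bridson}); the paper's Foertsch--Lytchak citation sidesteps this entirely by working on the boundary rather than in $X$.
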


\begin{proof}
\itemrefstar{s3}$\Rightarrow$\itemrefstar{s2}
follows from Foertsch and Lytchak \cite[Theorem 1.1]{fl}
by coning, and
\itemrefstar{s3}$\Leftrightarrow$\itemrefstar{s4}%
$\Leftrightarrow$\itemrefstar{s5}
from Propositions \ref{caprace splitting} and \ref{caprace minimality}.
\itemrefstar{s1}$\Rightarrow$\itemrefstar{s2}
is trivial.
Papasoglu and Swenson showed
\itemrefstar{s2}$\Rightarrow$\itemrefstar{s1}
\cite[proof of Theorem 23]{ps} and also
\itemrefstar{s1}$\Rightarrow$\itemrefstar{s3}
\cite[paragraph following the proof of Lemma 26]{ps}.
\end{proof}

We now complete the proof of \thmref{equivalences} from the introduction.

\begin{theorem} [\thmref{equivalences}]			\label{nonmain equivalences}
Let $\G$ be a group acting geometrically on a proper \textnormal{CAT($0$)} space $X$ with $\abs{\bd X} \ge 3$.
Let
\[\closedsets^{f.i.} = \setp{A \subseteq \bd X}{A \in \CInaught \textnormal{ for some finite index subgroup $\G_0$ of $\G$}}.\]
The following are equivalent.
\begin{enumerate}
\item
\label{e1}
There is some $A \in \closedsets^{f.i.}$ such that
for every round sphere $K \s \bd X$, $A \cap K$ lies on a proper subsphere of $K$.
\item
\label{e2}
There is some $A \in \closedsets^{f.i.}$ of circumradius $\le \frac{\pi}{2}$.
\item
\label{e6}
There exists a proper, $\pi$-convex set in $\closedsets^{f.i.}$.
\item
\label{e3}
$\bd X$ splits as a nontrivial spherical join.
\item
\label{e4}
A quasi-dense, closed convex set $X' \s X$ splits as a nontrivial product.
\end{enumerate}
Moreover, if $X$ is minimal then each of the above is equivalent to
\begin{enumerate}
\setcounter{enumi}{5}
\item
\label{e5}
$X$ splits as a nontrivial product.
\end{enumerate}
\end{theorem}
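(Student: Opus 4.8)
The plan is to establish the loop of implications \itemrefstar{e2}$\Rightarrow$\itemrefstar{e4}$\Rightarrow$\itemrefstar{e3}$\Rightarrow$\itemrefstar{e1}$\Rightarrow$\itemrefstar{e2}, to splice \itemrefstar{e6} in through \itemrefstar{e3}$\Rightarrow$\itemrefstar{e6}$\Rightarrow$\itemrefstar{e2}, and finally to fold in \itemrefstar{e5} under the minimality hypothesis exactly as \propref{suspension equivalences} does for suspensions. Almost every arrow reduces directly to a result already proved, so the real task is bookkeeping, in particular tracking which finite-index $\G_0 \le \G$ is active. I will use throughout that a finite-index subgroup $\G_0$ again acts geometrically on $X$, so that Guralnik--Swenson folded round spheres, \lemref{density of round spheres}, the $\pi$-convergence corollaries, \lemref{centers lemma}, \corref{proper subspheres}, \corref{pi-convex}, and \corref{radius rigidity} are all available with $\G$ replaced by $\G_0$; hence any statement about $A\in\CI$ applies to any $A\in\closedsets^{f.i.}$.

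Three arrows are essentially immediate. For \itemrefstar{e1}$\Rightarrow$\itemrefstar{e2}, given $A\in\CInaught$ I would pick a $\G_0$-folded round sphere $K\s\bd X$ (Theorem~A of~\cite{gs}), note that $A\cap K$ lies on a proper subsphere of $K$ by hypothesis, and apply \corref{proper subspheres} to get $\radius^{\bd X}(A)\le\tfrac{\pi}{2}$. For \itemrefstar{e6}$\Rightarrow$\itemrefstar{e2}, this is \corref{pi-convex} for $\G_0$ (whose proof in fact shows the given proper $\pi$-convex set itself has circumradius $\le\tfrac{\pi}{2}$). For \itemrefstar{e3}$\Rightarrow$\itemrefstar{e1} and \itemrefstar{e3}$\Rightarrow$\itemrefstar{e6}, write $\bd X = A*B$ nontrivially; by canonicity of the spherical de Rham join decomposition (Foertsch--Lytchak~\cite{fl}) some finite-index $\G_0$ preserves $A$ and $B$, and $A$ is closed, nonempty, proper, and $\pi$-convex (join factors always are), giving \itemrefstar{e6}; for \itemrefstar{e1} I would take any round sphere $K\s\bd X$ and apply \lemref{decompositions on spheres} to write $K = (A\cap K)*(B\cap K)$, observing that $A\cap K$ is a subsphere of $K$ that cannot equal $K$ (else the perpendicularity relations in \lemref{decompositions on spheres} force $B\cap K = \emptyset$ and hence $A = \bd X$), so $A\cap K$ lies on the proper subsphere $A\cap K$ of $K$.

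The substantive arrow is \itemrefstar{e2}$\Rightarrow$\itemrefstar{e4}, and this is the step I expect to need the most care. Starting from $A\in\CInaught$ with $\radius^{\bd X}(A)\le\tfrac{\pi}{2}$, I would split into cases. If some finite-index subgroup of $\G$ fixes a point of $\bd X$, then \propref{suspension equivalences}, implication \itemrefstar{s1}$\Rightarrow$\itemrefstar{s4}, already produces a quasi-dense, closed convex $X'\s X$ splitting as a product with $\R$ --- a nontrivial product, hence \itemrefstar{e4}. Otherwise no finite-index subgroup fixes a boundary point, so $\G_0$ acts cocompactly on $X$ without fixed point at infinity and \corref{radius rigidity} with $G=\G_0$ gives \itemrefstar{e4} directly. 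The delicate point is precisely that one cannot route \itemrefstar{e2} through the easier suspension picture: an invariant set of circumradius exactly $\tfrac{\pi}{2}$ need \emph{not} have a fixed circumcenter --- overcoming this is the whole content of \thmref{minimal radius rigidity} --- so the fixed-point-free branch genuinely requires the full strength of \corref{radius rigidity}.

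Finally, \itemrefstar{e4}$\Rightarrow$\itemrefstar{e3}, its converse, and (when $X$ is minimal) the equivalence of all of these with \itemrefstar{e5}, I would obtain from \propref{caprace splitting} and \propref{caprace minimality} just as in \propref{suspension equivalences}: a quasi-dense $X'$ has $\bd X' = \bd X$, so a nontrivial product splitting of $X'$ gives a nontrivial join splitting of $\bd X$; for the converse one passes via \propref{caprace minimality} to a quasi-dense, closed convex subspace on which the action is minimal and applies Caprace's splitting theorem; and if $X$ is minimal then $\Isom(X)$ --- which acts cocompactly since $\G$ does --- acts minimally, so \propref{caprace splitting} identifies \itemrefstar{e3} with $X$ splitting nontrivially, i.e. \itemrefstar{e5}, while \itemrefstar{e5}$\Rightarrow$\itemrefstar{e4} is trivial with $X'=X$. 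Apart from this last step and the case split above, the whole argument is an assembly of \lemref{centers lemma}, \corref{proper subspheres}, \corref{pi-convex}, \lemref{decompositions on spheres}, \corref{radius rigidity}, and the boundary-join/product dictionary of \propref{caprace splitting}--\propref{caprace minimality}.
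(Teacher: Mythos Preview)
Your proposal is correct and follows essentially the same approach as the paper: the same key ingredients (\corref{proper subspheres}, \corref{pi-convex}, \lemref{decompositions on spheres}, \thmref{main radius rigidity}/\corref{radius rigidity}, and \propref{caprace splitting}--\propref{caprace minimality}) are assembled into equivalent cycles. The only cosmetic differences are that the paper routes \itemrefstar{e2}$\Rightarrow$\itemrefstar{e3} directly (rather than via \itemrefstar{e4}), attaches \itemrefstar{e6} through \itemrefstar{e4}$\Rightarrow$\itemrefstar{e6} (rather than \itemrefstar{e3}$\Rightarrow$\itemrefstar{e6}), and leaves the fixed-point case split in \itemrefstar{e2}$\Rightarrow$\itemrefstar{e3} implicit; your more explicit treatment of that case via \propref{suspension equivalences} is a welcome clarification.
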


\begin{proof}
As in the preceding proof, \itemrefstar{e3}$\Leftrightarrow$\itemrefstar{e4}%
$\Leftrightarrow$\itemrefstar{e5}
follows from Propositions \ref{caprace splitting} and \ref{caprace minimality}.
\itemrefstar{e3}$\Rightarrow$\itemrefstar{e1}
follows from \lemref{decompositions on spheres},
\itemrefstar{e1}$\Rightarrow$\itemrefstar{e2}
from \corref{proper subspheres}, and
\itemrefstar{e2}$\Rightarrow$\itemrefstar{e3}
from \thmref{main radius rigidity}.
\itemrefstar{e4}$\Rightarrow$\itemrefstar{e6} is straightforward,
and 
\itemrefstar{e6}$\Rightarrow$\itemrefstar{e2}
follows from \corref{pi-convex}.
\end{proof}

We remark that the subgroup $\G_0$ in parts \itemrefstar{e1}, \itemrefstar{e2}, and \itemrefstar{s3} may be taken to be $\G \cap \Isom_0(X)$, where $\Isom_0(X)$ is the subgroup of $\Isom(X)$ that does not permute the factors of the de Rham decomposition of $X$, and fixes pointwise the boundary of the Euclidean factor.

\bibliographystyle{amsplain}
\bibliography{refs}

\end{document}